\theoremstyle{plain}
\newtheorem{theorem}{Theorem}[section]
\newenvironment{taggedtheorem}[1]
 {\taggedtheoremx}
 {\endtaggedtheoremx}
\newtheorem{corollary}[theorem]{Corollary}
\newtheorem{induction hypothesis}[theorem]{Induction Hypothesis}
\newtheorem{lemma}[theorem]{Lemma}
\newtheorem{proposition}[theorem]{Proposition}
\numberwithin{equation}{section}
\theoremstyle{definition}
  \newtheorem*{data availability}{Data availability}
  \newtheorem*{conflicts of interest}{Conflicts of interest}
\theoremstyle{remark}
\newtheorem{remark}[theorem]{Remark}
\newtheorem*{acknowledgements}{Acknowledgments}
\newcommand{\mytag}[2]{%
  \text{#1}%
  \@bsphack
  \begingroup
    \@onelevel@sanitize\@currentlabelname
    \edef\@currentlabelname{%
      \expandafter\strip@period\@currentlabelname\relax.\relax\@@@%
    }%
    \protected@write\@auxout{}{%
      \string\newlabel{#2}{%
        {#1}%
        {\thepage}%
        {\@currentlabelname}%
        {\@currentHref}{}%
      }%
    }%
  \endgroup
  \@esphack
}
\begin{document}

\title[Local coefficients and  Gelfand-Graev representations]{Local coefficients and Gelfand-Graev representations for non-split covers on SL$(2)$}

\author{Yeongseong Jo}
\address{Department of Mathematics Education, Ewha Womans University, Seoul 03760, Republic of Korea}
\email{\href{mailto:jo.59@buckeyemail.osu.edu}{jo.59@buckeyemail.osu.edu};\href{mailto:yeongseong.jo@ewha.ac.kr}{yeongseong.jo@ewha.ac.kr}}

\subjclass[2020]{Primary 11F70; Secondary 22E50}
\keywords{Bushnell–Kutzko's types and covers, Gelfand-Graev representations, Langlands-Shahidi local coefficients}
%\date{}

\begin{abstract}
A purely local approach has been developed by Krishnamurthy and Kutzko to compute Langlands-Shahidi local coefficient for ${\rm SL}(2)$ via 
types and covers \`{a} la Bushnell-Kutzko. In this paper, we extend their method to the non-split case and complete their project.
We also study the algebraic structure of Gelfand-Graev representations, which generalizes
the results of Chan-Savin and Mishra-Pattanayak to ${\rm SL}(2)$ over non-archimidean local fields without any restriction on the characteristic. 
\end{abstract}

\maketitle

\section{Introduction}

Throughout this paper $F$ will denote a non-arhimedean local field and we let $G={\rm SL}_2(F)$.
The purpose of this paper is to illustrate an algebraic method of achieving an explicit formula for the {\it Langlands--Shahidi  local coefficients}
and computing the isotypic subspace of {\it Gelfand--Graev representations} of $G$ by means of {\it types} and {\it non-split covers} \`{a} la  Bushnell--Kutzko. 
At a first glance, the results seem to be rather tangential to each other, but two topics are commonly concerned with what is known as {\it Whittaker models},
which play an important role in the automorphic forms and representation theory of linear reductive groups due to the nature of its relation to $L$-functions,
notably in the {\it Langlands--Shahidi method} and {\it Rankin--Selberg integrals} as well. Specifically, local coefficient by definition is a constant of proportionality arising from the uniqueness of Whittaker models \cite{Rod72}, whereas the Gelfand--Graev representation is given by the dual of the Whittaker space. In the meantime, the theory of types and covers provides a systematic way of analyzing the structure of smooth representations of linear reductive $p$-adic groups via the representation theory of compact open subgroups (cf. \Cref{TC}).

\par
This paper consists of two parts. Our first main result of this paper is a complete expression of local coefficients via types and covers, restricting to the case of $F^{\times}$
as a Levi subgroup of $G$. Specifically, the character of $F^{\times}$ is quadratic and ramified (``non-split"). As a matter of fact, this case is more challenging than others. 
We resolve the only remaining case of non-split cover, which is addressed in the work by Krishnamurthy and Kutzko \cite[\S 3.4 Remark 2]{KK}.
The reader should consult \S \ref{TC} and \S \ref{SL2-LC} for details about the unexplained notations in the statement below.

\begin{taggedtheorem}{A}[\Cref{LocalCoefficient}]
\label{mainA}
Let $\psi$ be a fixed additive character of level $0$. Let $\widetilde{\eta}$ be a non-trivial ramified quadratic character of $F^{\times}$ of level $n_{\eta}$ with $\eta=\widetilde{\eta} \restriction_{\mathfrak{o}^{\times}_F}$. Then we have
\[
  C_{\psi}(s,\widetilde{\eta},{\rm w}_0)=\widetilde{\eta}(-\varpi_F^{n_{\eta}})\tau(\eta,\psi,\varpi_F^{-n_{\eta}})q^{-n_{\eta}(s-1)}.
\]
\end{taggedtheorem}

The local coefficient has a great influence on the development of contemporary number theory, and is closed related to the theory of local factors {\it \`{a} la} {\it the Langlands-Shahidi method} \cite{Sha78,Sha81,Sha84}. Shahidi subsequently defined the so-called {\it Langlands--Shahidi $\gamma$-factors} inductively so that the local coefficient factorizes as a product of such $\gamma$-factors.
Another briefly aforementioned method for constructing local exterior square factors via the theory of integral representations is developed by  Bump and Friedberg in late 1980's \cite{BF89}.
By definition, it is a proportionality factor between an integral and its dual integral related to each other through the theory of either Fourier transforms or intertwining operators \cite{Mat15}. On the perspective of  the local Langlands correspondence, the resulting local factors ought to be the same, probably up to some normalizations of certain Haar measures.
In practice,  this flavor of comparisons is surely non-trivial to answer, and is known for only a handful amount of cases; for example, we refer the reader to \cite{Kap15,Sha84} for Rankin--Selberg local factors, and \cite{AKMSS21,BP21} for Asai local factors. In this regard, Bump and Friedberg \cite[Conjecture 4]{BF89} predicted that the equality should be valid in the context of local exterior square factors attached to irreducible admissible representation of ${\rm GL}_{n}(F)$. Our \Cref{mainA} confirms that the equality between local factors \cite[Conjecture 4]{BF89}  obtains from the Langlands--Shahidi method and those obtained via
Bump--Friedberg integrals holds unconditionally at least for ${\rm GL}_1(F) \cong F^{\times}$.

\par
Our strategy to tackle the problem of computing local coefficient on $F^{\times}$ inside $G$ is going back to at least the pioneering work of Casselman \cite{Cas80},
where local coefficients in the context of unramified principal series representations are explicitly computed. The computation there relies on finding the effect of
the intertwining operator on the subspace of vectors fixed by the Iwahori subgroup. The role played by the trivial representation of the Iwahori subgroup in Casselman's trick can be regarded 
as an extreme incident of the theory of types and covers. Previously, types and covers are adapted by Krishnamurthy and Kutzko \cite{KK} for {\it split covers} of  $G$, or in other words, the case where the character is not an unramified twist of a quadratic character.
Afterwords, the author and Krishnamurthy \cite{JK21} dealt with local coefficients associated to covers of a homogeneous pair of irreducible supercuspidal representations $(\pi_1, \pi_2)$ of the Levi subgroup ${\rm GL}_n(F) \times {\rm GL}_n(F) $ embedded in ${\rm GL}_{2n}(F)$.

\par
Our next aim of this paper is to express the principal series block of 
the Gelfand--Graev representation as a cyclic module over the Iwahori--Hecke algebra. 
As before, the reader is advised to refer to Section \ref{TC} for Hecke algebras, and undefined terminology and to Section \ref{GGrep} for Gelfand--Graev representations
in the following statement.

\begin{taggedtheorem}{B}[\Cref{Gelfand-Grev}]
\label{mainB}
Let $\psi$ be a fixed additive character of level $0$. Let $\widetilde{\eta}$ be a non-trivial ramified quadratic character of $F^{\times}$ of level $n_{\eta}$ with $\eta=\widetilde{\eta} \restriction_{\mathfrak{o}^{\times}_F}$. As $\mathcal{H}(G,\lambda_{\eta})-$modules, we obtain isomorphisms
\[
({\text {\rm c-ind}}_U^G{\psi})^{\lambda_{\eta}} \cong \mathcal{H}(G,\lambda_{\eta} ) \otimes_{{\mathcal{H}(K,\lambda_{\eta})}}  \mathbb{C}_{-1}.
\]
\end{taggedtheorem}

The structure of Gelfand--Graev representation was originally treated by Chan and Savin for unramified principal series blocks of split reductive groups \cite{CS18}, and was further refined by Mishra and Pattanaya for principal series blocks for connected reductive groups over $F$ whose residue characteristic is large enough \cite{MP21}. Shortly after, Gao, Gurevich, and Karasiewicz extended Chan and Savin's result for linear groups \cite{CS18} to the Iwahori fixed vectors in the Gelfand-Graev representation of covering groups as a module over the Iwahori-Hecke algebra  \cite{GGK22}.

\par
A part of reasons why Mishra and Pattanaya \cite{MP21} primarily considered limited characteristics of $F$ is that they made a reduction to depth-zero cases and then to finite group cases.
In doing so, they were able to determine the generator of isotypical space for the cover of Gelfand-Graev representations over finite fields, which in turn admits the $1$-dimensional sign representation.
On the other hand, our proof takes elements from Chan and Savin's  \cite{CS18} unconditional and direct argument. Similarly to Chan and Savin,  we laboriously compute the outcome
of Hecke operators on test functions from which we find the generator possessing the $1$-dimensional sign representation (cf. \Cref{Whittaker-Hecke}).

\par
In principle, all the computations boil down to understanding the behavior of the intertwining or Hecke operators on certain test functions (cf. \Cref{intertwining}, \Cref{Whittaker-Hecke}).
We expect that those test functions that naturally occur in the theory of types and cover open a new chapter for other situation, and consider the present paper a groundwork for that direction. Benefiting from \cite{JK21}, it is also our belief that the structure of the isotypic component of the space of compactly supported Whittaker functions as modules over Iwahori-Hecke algebra, is responsible for computing exterior square local coefficients in more general settings, remarkably, Siegel Levi subgroups isomorphic to ${\rm GL}_n(F)$ lying inside ${\rm Sp}_{2n}(F)$. It will be therefore very interesting to see if our robust arguement can be carried out to parabolically induced representations of symplectic groups in $2n$-variables. The author plans to investigate them in a future study.

\section{Non-split Covers and Intertwining Operators}
\label{Intertwining O}

\subsection{Types and Covers}
\label{TC}

Let $F$ be a non-archimedean local field with its residual finite field $\mathbb{F}_q$ and denote by $p$ the characteristic of $\mathbb{F}_q$.
The base field $F$ is a finite extension of $\mathbb{Q}_p$ or $\mathbb{F}_p((t))$, called a {\it $p$-adic field} in characteristic 0,
or a {\it local function field} in characteristic $p > 0$. 
Let $\mathfrak{o}_F$ be its ring of integers, $\mathfrak{p}_F$ its maximal ideal.
We fix a generator $\varpi_F$ of $\mathfrak{p}_F$ and normalize the absolute value $|\cdot|$ of $F$ so that
$|\varpi_F|=q_F^{-1}=q^{-1}$. When there is no possibility of confusion, we sometimes drop the subscripts, while working over a fixed $F$.
We let $G={\rm SL}_2(F)$. Let $B$ be the subgroup of $F$-points of the Borel subgroup of upper triangular matrices. Then $B=TU$, where
\[
 T= \left\{  \begin{pmatrix} a & 0 \\ 0 & a^{-1} \end{pmatrix} \, \middle| \, a \in F^{\times} \right\};
\quad
  U= \left\{ u(x):= \begin{pmatrix} 1 & x \\ 0 & 1 \end{pmatrix} \, \middle| \, x \in F \right\}.
\]
To be specific, we identify $T$ with  $F^{\times}$ when no confusion can arise. Let $\overline{B}=T\overline{U}$ denote the opposite Borel subgroup of lower triangular matrices, where
\[
 \overline{U}= \left\{ \overline{u}(x):= \begin{pmatrix} 1 & 0 \\ x & 1 \end{pmatrix} \, \middle| \, x \in F \right\}.
\]
Let $K=G(\mathfrak{o}_F)$ be a maximal compact subgroup and let ${\rm \bf W}=\{ {\rm I}_2,  {\rm w}_0\}$, where
\[
 {\rm I}_2=\begin{pmatrix} 1  &  0 \\ 0 & 1 \end{pmatrix}  \quad \text{and} \quad {\rm w}_0=\begin{pmatrix} 0  & -1 \\ 1 & 0 \end{pmatrix}
\]
are two fixed distinct representatives of cosets in $N_G(T) \slash T$. We let $\ell$ denotes the length function on ${\rm \bf W}$ defined by $\ell({\rm I}_2)=0$ and $\ell({\rm w}_2)=1$. The character $\nu_s$ is given by the formula
\[
  \nu_s \left( \begin{pmatrix} a & 0 \\ 0 & a^{-1} \end{pmatrix}  \right)=|a|^s.
\]
We may view it as a character of $B$ by extending $\nu_s$ trivially to $U$. Let $\delta$ denote the modulus 
character of $B$; explicitly,
\[
  \delta \left( \begin{pmatrix} 1 & x \\ 0 & 1 \end{pmatrix}  \begin{pmatrix} a & 0 \\ 0 & a^{-1} \end{pmatrix}  \right)=|a|^2.
\]
For a subgroup $H$ of $G$ and $g \in G$, let $H^g$ denote $g^{-1}Hg$. If $\tau$ is a representation of $H$, let $\tau^g$ be 
a representation of $H^g$ such that $\tau^g(h)=\tau(ghg^{-1})$, $h \in H^g$. Let ${\textbf 1}_H$ denote a trivial character on $H$.

\par
For any topological group $H$, we write $\widehat{H}$ to denote the group of continuous homomorphism from $H$ to $\mathbb{C}^{\times}$.
Particularly, we are interested in the character group $\widehat{\mathfrak{o}}_F^{\times}$.
Let $T(\mathfrak{o}_F):=T \cap K$. Specifically, $T(\mathfrak{o}_F) \cong \mathfrak{o}_F^{\times}$.
Then $T \slash T(\mathfrak{o}_F)$ is a free abelian group of rank $1$. 
Let $X(T)$ denote the group of continuous homomorphisms of $T$ into $\mathbb{C}^{\times}$ which are trivial on $T(\mathfrak{o}_F)$ -- 
called the group of unramified characters of $T$. Moreover $X(T)$ is equipped with the structure of a complex variety 
whose ring of regular functions is $\mathbb{C}[T \slash T(\mathfrak{o}_F)] \cong \mathbb{C}[t,t^{-1}]$.

\par
By a {\it a cuspidal pair} in $G$, we mean a pair $(M,\sigma)$ in $G$,
where $M$ is either $T$ or $G$,
and $\sigma$ is a supercuspidal representation of $M$.
Two such pairs $(M_i,\sigma_i)$, $i=1,2$, are said to be
{\it inertially equivalent} if there exist $g \in G$ and an unramified character $\mu$ of $G$
such that $M_2=M_1^g=g^{-1}M_1g$, and $\sigma_2$ is equivalent to the representation 
$\sigma_1^g \otimes \mu : x \mapsto \sigma_1(gxg^{-1})\mu(x)$ of $M_2$.
We denote by $[(M,\sigma)]_G$ the $G$-inertial equivalence class of a cuspidal pair $(M,\sigma)$ in $G$.
Let $\mathfrak{B}(G)$ denote the set of inertial equivalence classes of cuspidal pairs in $G$. 
In particular, we let $\widetilde{\eta}$ be a character of $F^{\times}$ and we put $\eta=\widetilde{\eta} \restriction_{\mathfrak{o}^{\times}_F}$.
Let $\widetilde{\eta}_i$, $i=1,2$, be characters of $T$. Then $\widetilde{\eta}_2$ is $G$-inertially equivalent to 
$\widetilde{\eta}_1$ if and only if there exists $s \in \mathbb{C}$ such that $\widetilde{\eta}_2=\widetilde{\eta}_1^{\pm 1}\nu_s$. 

\par
It is a fundamental result of Bernstein (cf. \cite[\S 1.4]{Kut04}) that $\mathfrak{R}(G)$ of smooth complex representations of $G$ decomposes into a product of 
full subcategories
\[
  \mathfrak{R}(G) \cong \prod_{\mathfrak{s} \in \mathfrak{B}(G)}  \mathfrak{R}^{\mathfrak{s}}(G).
\]
We divide the equivalence classes into so-called
 \begin{enumerate}[label=$(\mathrm{\roman*})$]
 \item\label{sup} supercuspidal blocks;
  \item\label{pri} principal series blocks.
 \end{enumerate}
\ref{sup} For each irreducible supercuspidal representation $\sigma$ of $G$, we write $\mathfrak{s}(\sigma)$ for the equivalence class of $\sigma$ in $\mathfrak{R}(G)$.
We let $\mathfrak{R}^{\mathfrak{s}(\sigma)}(G)$ be the full subcategory of $ \mathfrak{R}(G)$ whose objects are isomorphic to sums of copies of $\sigma$.
\ref{pri} Let $\chi$ be a character of $B$, and $\iota^{G}_{B}$ the functor of normalized parabolic induction.
We let $\mathcal{F}_B(\chi)$ denote the space of $\iota^{G}_{B}(\chi)$. To be precise, $\mathcal{F}_B(\chi)$ is the space of smooth functions 
$f : G \rightarrow \mathbb{C}$ that satisfy
\[
 f(utg)=\delta^{1/2}(t)\chi(t)f(g)
\]
for any $u \in U$, $t \in T$, and $g \in G$, and the action of $\rho$ on $\mathcal{F}_B(\chi)$ is by right translation, namely $(\rho(h)\cdot f)(g):=f(gh)$.
We denote $G$-intertially equivalent class by $\mathfrak{s}_{\eta}$.
We let $\mathfrak{R}^{\mathfrak{s}_{\eta}}(G)$ be the full subcategory of $\mathfrak{R}(G)$ whose irreducible objects are 
exactly those that occur as a subquotient of some $\mathcal{F}_Q(\widetilde{\eta} \otimes \nu_s)$, where
$Q$ is either $B$ or $\overline{B}$ \cite[\S 3.2, P.228]{KK}.

\par
With $\widetilde{\eta}_i$, $i=1,2$, as above, $\widetilde{\eta}_1$ is $T$-inertially equivalent to $\widetilde{\eta}_2$
if and only if there exists $s \in \mathbb{C}$ such that $\widetilde{\eta}_2=\widetilde{\eta}_1\nu_s$.
Let $\mathfrak{t}_{\eta}$ be the corresponding $T$-inertially equivalent class. 
Let $ \mathfrak{R}^{\mathfrak{t}_{\eta}}(T)$ be the full subcategory of $\mathfrak{R}(T)$
whose object $(\pi,V)$ has the property that $\pi(x)v=\eta(x)v$ for all $x \in T(\mathfrak{o}_F)$ and $v \in V$.
Then the category $\mathfrak{R}(T)$ similarly decomposes as
a product of its subcategories $\mathfrak{R}^{\mathfrak{t}_{\eta}}(T)$:
\[
\mathfrak{R}(T) \cong  \prod_{\eta \in \widehat{\mathfrak{o}}_F^{\times}} \mathfrak{R}^{\mathfrak{t}_{\eta}}(T).
\]

\par
Let $J$ be a compact open subgroup of $G$, let $(\lambda,W)$ be a smooth irreducible representation of $J$, and write $(\check{\lambda},\check{W})$
for the contragredient representation. Then $\mathcal{H}(G,\lambda)$ is the space of compactly supported function $f : G \rightarrow \mathrm{End}_{\mathbb{C}}(\check{W})$ that satisfy
\[
  f(hxk)=\check{\lambda}(h)f(x)\check{\lambda}(k),\quad x \in G,\;\; h,k \in J.
\]
It is a unital (associative) algebra with respect to the standard convolution operation
\[
  f_1 \star f_2(y)=\int_{G} f_1(x) f_2(x^{-1}y) \, dx, \quad f_1,f_2 \in \mathcal{H}(G,\lambda), \;\; y \in G,
\]
where we normalized the Haar measure on $G$ such that ${\rm vol}(J)=1$.

\par
An element $x \in G$ is called {\it intertwine} if there is a non-zero $J \cap x^{-1}Jx$-homomorphism between $\lambda$ and the conjugate representation $\lambda^x$.
It is equivalent to saying that the double coset $JxJ$ supports a non-zero function in $\mathcal{H}(G,\lambda)$ \cite[\S 2.2]{Kut04}.
Specifically, when $\lambda$ is $1$-dimensional, $\lambda(xkx^{-1})=\lambda(k)$ for any $k \in J \cap x^{-1}Jx$.
We denote by $\mathcal{I}_G(\lambda)$ the set of elements in $G$ which intertwine $\lambda$.

\par
A pair $(J,\lambda)$ is said to be a {\it type} for $\mathfrak{s} \in \mathfrak{B}(G)$, or simply a $\mathfrak{s}$-{\it type}  if 
for every irreducible object $(\pi,V) \in \mathfrak{R}(G)$, we have $(\pi,V) \in \mathfrak{R}^{\mathfrak{s}}(G)$
if and only if $\pi$ contains $\lambda$, that is to say, the space of $\lambda$-covariants $V_{\lambda}:=\mathrm{Hom}_J(W,V)$ is non-trivial.
For $a \in \mathrm{End}_{\mathbb{C}}(\check{W})$, $a^{\vee}$ denote the transpose of $a$ with respect to
the canonical pairing between $W$ and $\check{W}$.
We similarly define the Hecke algebra $\mathcal{H}(G,\check{\lambda})$. There is a canonical anti-isomorphism $f \mapsto \check{f}$ from
$\mathcal{H}(G,\lambda) \rightarrow \mathcal{H}(G,\check{\lambda})$ given by $\check{f}(g)=(f(g^{-1}))^{\vee}$.
There is a natural left $\mathcal{H}(G,\lambda)-{\rm Mod}$ structure
(also denoted as $\pi$) given by
\[
  (f\star \phi)(w):=(\pi(f)\phi)(w)=\int_G \pi(g)\phi(f(g)^{\vee}w) \,dg
\]
for $\phi \in V_{\lambda}$, $w \in W$, and $f \in \mathcal{H}(G,\lambda)$.
For  $\mathfrak{s}$-type $(J,\lambda)$, the map $V \mapsto V_{\lambda}$ induces an equivalence of 
categories $\mathfrak{R}^{\mathfrak{s}}(G) \cong \mathcal{H}(G,\lambda)-{\rm Mod}$.

\par
Given $\eta$ and $\widetilde{\eta}$ as above, set $n_{\eta}=1$ if $1+\mathfrak{p}_F \subseteq \ker \eta$.
We let $n_{\eta}$ to be the smallest number $n$ so that $1+\mathfrak{p}_F^n \subset \ker \eta$;
otherwise it is defined to be the smallest positive integer $n$ so that $1+\mathfrak{p}_F^n \subseteq \ker \eta$.
In particular, if $\widetilde{\eta}$ is unramified, $n_{\eta}=1$.
The compact open subgroup is given by
\[
  J_{\eta}=\left\{ \begin{pmatrix} c_{11} & c_{12} \\ c_{21} & c_{22} \end{pmatrix} \in G \, \middle| \, c_{11},c_{22} \in \mathfrak{o}_F^{\times},  c_{12} \in \mathfrak{o}_F,
  c_{21} \in \mathfrak{p}_F^{n_{\eta}}  \right\},
\]
and $\lambda_{\eta}$ is a function on $J_{\eta}$ given by
\[
 \lambda_{\eta} \left(  \begin{pmatrix} c_{11} & c_{12} \\ c_{21} & c_{22} \end{pmatrix} \right)=\eta(c_{11}).
\]
It follows from \cite{Kut04} that the pair $(J_{\eta},\lambda_{\eta})$ is a $G$-cover for $(T(\mathfrak{o}_F),\eta)$.
We recall certain crucial properties of a $G$-cover $(J_{\eta},\lambda_{\eta})$:
 \begin{enumerate}[label=$(\mathrm{\alph*})$]
 \item\label{Iwahori} (Iwahori Factorization) $J_{\eta}=(J_{\eta} \cap \overline{U})T(\mathfrak{o}_F)(J_{\eta} \cap U)$.
  \item\label{kernel} The representation $\lambda_{\eta}$ is trivial on $J_{\eta} \cap \overline{U}$ and $J_{\eta} \cap U$, while $\lambda_{\eta} \restriction_{T(\mathfrak{o}_F)}=\eta$.
  The pair $(J_{\eta},\lambda_{\eta})$ is a type for $\mathfrak{s}_{\eta}$.
   \item\label{injection} There is a support preserving injective algebra map
$     t_Q : \mathcal{H}(T,\eta) \rightarrow \mathcal{H}(G,\lambda_{\eta})$
   that realizes the parabolic induction functor $\iota_Q^G$ at the level of Hecke algebras.
    It means that the following diagram commutes:
    \[
    \xymatrixcolsep{4pc}
\xymatrixrowsep{2pc}
    \xymatrix{
 \mathfrak{R}^{\mathfrak{t}_{\eta}}(T) \ar[d]_{\iota_Q^G} \ar[r]^{\hspace{-2em}\cong}
& {\mathcal H}(T,\eta){\rm -Mod} \ar[d]^{(t_Q)_{\ast}} \\
\mathfrak{R}^{\mathfrak{s}_{\eta}}(G)  \ar[r]^{\hspace{-2em}\cong} & {\mathcal H}(G,\lambda_{\eta}){\rm -Mod}
}
\]
where $(t_Q)_{\ast}$ is a right adjoint of the restriction functor $t_Q^{\ast} : {\mathcal H}(G,\lambda_{\eta}){\rm -Mod} \rightarrow {\mathcal H}(T,\eta){\rm -Mod}$.
 \end{enumerate}

We oftentimes identify ${\mathcal H}(T,\eta)$ as a sub-algebra of $\mathcal{H}(G,\lambda_{\eta})$ using the embedding $t_Q$. The cover $(J_{\eta},\lambda_{\eta})$
is said to be a {\it split cover}, when $\eta^2 \neq 1$, and $(J_{\eta},\lambda_{\eta})$ is called a {\it non-split cover}, otherwise. Among non-split covers, local coefficients and Gelfand--Graev representations in the frame work of unramified characters $\widetilde{\eta}$ is well understood since the seminal work of Casselman \cite{Cas80}.
For this reason, we fix a non-trivial ramified quadratic character $\widetilde{\eta}$ of $F^{\times}$ with $\eta=\widetilde{\eta} \restriction_{\mathfrak{o}^{\times}_F}$ so that $\widetilde{\eta}^2=1$, and only focus on these types of characters in the rest of this paper.

\subsection{Intertwining Operators}
\label{sec-intertwining}

We write $\mathcal{F}(s,\chi)$ to denote the induced space $\mathcal{F}_B  (\chi \otimes \nu_s)$.
We define $\mathbb{C}_{\lambda_{\eta}}$ to be the space of complex numbers for which $J_{\eta}$ acts on $\mathbb{C}_{\lambda_{\eta}}$ by $j \cdot w=\lambda_{\eta}(j)w$.
Throughout the rest of the paper, we put $W=\mathbb{C}_{\lambda_{\eta}}$. Let $V^{\lambda_{\eta}}$ be the $\lambda_{\eta}$-isotypic subspace of $V$.
As $\lambda_{\eta}$ is one-dimensional, it is straightforward from \cite[(2.13)]{BK98} that there is a natural isomorphism $V_{\lambda_{\eta}} \cong V_{\lambda_{\eta}} \otimes_{\mathbb{C}} W \cong V^{\lambda_{\eta}}$ given by $\phi \otimes w \mapsto \phi(w)$. 
The functions $f_{{\rm I}_2,\widetilde{\eta}}$ and $f_{{\rm w}_0,\widetilde{\eta}}$  in $\mathcal{F}(s,\widetilde{\eta})^{\lambda_{\eta}}$ are given by
\[
  f_{{\rm I}_2,\widetilde{\eta}}(g)=
  \begin{cases}
  \widetilde{\eta}\nu_s\delta^{1/2}(b) \lambda_{\eta}(j) & \text{if} \quad g=bj \in BJ_{\eta} \\
  0 & \text{otherwise,}
  \end{cases}
\]
and
\[
  f_{{\rm w}_0,\widetilde{\eta}}(g)=
  \begin{cases}
 \widetilde{\eta}\nu_s\delta^{1/2}(b) \lambda_{\eta}(j) & \text{if} \quad g=b{\rm w}_0j \in B{\rm w}_0J_{\eta} \\
  0 & \text{otherwise.}
  \end{cases}
\]
For ${\rm w} \in {\rm \bf W}$, let $T_{\rm w} \in \mathcal{H}(G,\lambda_{\eta})$ denote the function supported on the double coset $J_{\eta}{\rm w}J_{\eta}$
and given by the formula $T_{\rm w}(h{\rm w}k)=\check{\lambda}(h)\check{\lambda}(k)$, $h,k \in J_{\eta}$.
Let $\mathcal{H}(K,\lambda_{\eta})$ be the sub-algebra in $\mathcal{H}(G,\lambda_{\eta})$ spanned by the function $T_{\rm w}$ for ${\rm w} \in {\rm \bf W}$.
Just as in \cite[\S 3.3]{Kut04}, we let $T^{\ast}_{\rm w}$ be the normalized Hecke operator so that 
\[
T^{\ast}_{\rm w}=\epsilon^{\ell({\rm w})/2}{\rm vol}(J_{\eta} {\rm w} J_{\eta})^{-1/2}T_{\rm w}, \quad {\rm w} \in {\rm \bf W}.
\] 
The sub-algebra $\mathcal{H}(K,\lambda_{\eta})$ inherits a left action on $\mathcal{H}(K,\check{\lambda}_{\eta})$
given by $C \bullet D:= D \star \check{C}$ for $C \in \mathcal{H}(K,\lambda_{\eta})$ and $D \in \mathcal{H}(K,\check{\lambda}_{\eta})$.
We summarize the properties of $\mathcal{H}(K,\lambda_{\eta})$ from \cite[\S 11.5 and \S 11.6]{BK98}.
\begin{lemma}
\label{BK-basis}
The set $\{T_{{\rm I}_2},T_{{\rm w}_0} \}$ is a $\mathbb{C}$-basis of $\mathcal{H}(K,\lambda_{\eta})$.
Specifically, $\dim_{\mathbb{C}}(\mathcal{H}(K,\lambda_{\eta}))=2$.
\end{lemma}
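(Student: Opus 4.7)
The plan is to use the definition of $\mathcal{H}(K,\lambda_{\eta})$ as being, by construction, the $\mathbb{C}$-span of $\{T_{{\rm I}_2},T_{{\rm w}_0}\}$, so that $\dim_{\mathbb{C}}\mathcal{H}(K,\lambda_{\eta})\le 2$ comes for free. The entire content of the statement is therefore (i) that both $T_{{\rm I}_2}$ and $T_{{\rm w}_0}$ are well-defined nonzero elements of $\mathcal{H}(G,\lambda_{\eta})$, and (ii) that they are linearly independent.

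For (i), I would appeal to the intertwining criterion recalled just before the lemma: an element $x\in G$ supports a nonzero $T_{x}\in\mathcal{H}(G,\lambda_{\eta})$ iff $x$ intertwines $\lambda_{\eta}$, i.e., $\lambda_{\eta}(xkx^{-1})=\lambda_{\eta}(k)$ for all $k\in J_{\eta}\cap x^{-1}J_{\eta}x$. For $x={\rm I}_2$ this is trivial. For $x={\rm w}_0$ the Iwahori factorization \ref{Iwahori} reduces the verification to the torus part: conjugation by ${\rm w}_0$ sends $\operatorname{diag}(a,a^{-1})$ to $\operatorname{diag}(a^{-1},a)$, so one needs $\eta(a^{-1})=\eta(a)$ on $\mathfrak{o}_F^{\times}$, which is precisely the fact that $\widetilde{\eta}^2=1$. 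Hence ${\rm w}_0$ intertwines $\lambda_{\eta}$, and $T_{{\rm w}_0}$ is a well-defined nonzero element (this is also the assertion invoked via \cite[Lemma 2.3]{Kut04} in the text).

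For (ii), I would argue by disjoint supports. By construction $T_{\rm w}$ is supported on the single double coset $J_{\eta}{\rm w}J_{\eta}$, so it suffices to show $J_{\eta}\cap J_{\eta}{\rm w}_0J_{\eta}=\emptyset$, i.e.\ ${\rm w}_0\notin J_{\eta}$. This is immediate from the shape of $J_{\eta}$: the $(2,1)$-entry of ${\rm w}_0$ is $1$, and since $\widetilde{\eta}$ is nontrivial one has $n_{\eta}\ge 1$, so $1\notin\mathfrak{p}_F^{n_{\eta}}$. More conceptually, the Iwahori factorization in property \ref{Iwahori} shows that each element of $K$ lies in exactly one of the two double cosets $J_{\eta}$ and $J_{\eta}{\rm w}_0J_{\eta}$ (these are the two $J_{\eta}$-double cosets appearing in the Bruhat decomposition $K=J_{\eta}{\bf W}J_{\eta}$), giving $K=J_{\eta}\sqcup J_{\eta}{\rm w}_0J_{\eta}$.

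The main ``obstacle'' is really just bookkeeping: one must check that ${\rm w}_0$ genuinely intertwines the \emph{ramified quadratic} $\lambda_{\eta}$ (where the hypothesis $\widetilde{\eta}^2=1$ is used in an essential way) and that the two Bruhat cells in $K$ relative to $J_{\eta}$ are disjoint. With both in hand, $T_{{\rm I}_2}$ and $T_{{\rm w}_0}$ are nonzero functions with disjoint supports, hence linearly independent, which completes the proof that they form a $\mathbb{C}$-basis of $\mathcal{H}(K,\lambda_{\eta})$ and that $\dim_{\mathbb{C}}\mathcal{H}(K,\lambda_{\eta})=2$.
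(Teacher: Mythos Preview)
Your argument is correct. The paper itself does not supply a proof of this lemma; it simply records it as a consequence of \cite[\S\S 11.5--11.6]{BK98}, so your direct, self-contained verification is a genuinely different (and more elementary) route. One small refinement: when you say the Iwahori factorization ``reduces the verification to the torus part,'' what is really being used is that $J_{\eta}\cap {\rm w}_0^{-1}J_{\eta}{\rm w}_0$ consists of matrices with \emph{both} off-diagonal entries in $\mathfrak{p}_F^{n_{\eta}}$, so that $c_{11}c_{22}\equiv 1\pmod{\mathfrak{p}_F^{n_{\eta}}}$ and hence $\lambda_{\eta}({\rm w}_0 k{\rm w}_0^{-1})=\eta(c_{22})=\eta(c_{11})^{-1}$; the condition $\eta^2=1$ is then exactly what is needed. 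Your disjoint-support argument for linear independence is clean and uses precisely the hypothesis $n_{\eta}\ge 1$ coming from ramification. The payoff of your approach is that it makes transparent where the quadratic hypothesis on $\widetilde{\eta}$ enters, whereas the paper's citation to \cite{BK98} places the result in the general machinery of covers without isolating this point.
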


Since any element ${\rm w}$ in ${\rm \bf W}$ certainly intertwines the representation $\lambda_{\eta}$,  \cite[Lemma 2.3]{Kut04} (cf. \cite[(11.6)]{BK98}) implies that
$\mathcal{I}_K(\lambda_{\eta})=J_{\eta} \cup J_{\eta} {\rm w}_0 J_{\eta}$. In addition, we observe from \cite[Lemma 3.2.4]{Kim16} that
\[
 \{ g \in G \,|\, \text{There is $f \in \mathcal{F}(s,\widetilde{\eta})^{\lambda_{\eta}}$ with $f(g) \neq 0$} \}=\{g\in G \,| \, {\rm Hom}_{B \cap J^g}(\widetilde{\eta}\otimes {\textbf 1}_U,\widetilde{\eta}^g) \neq 0 \} 
 \]
is a subset of $B\mathcal{I}_K(\lambda_{\eta})$, because $G=BK$. Hence, any $f \in \mathcal{F}(s,\widetilde{\eta})^{\lambda_{\eta}}$  is determined by its restriction to $\mathcal{I}_K(\lambda_{\eta})$. 
In this regard, we define 
\begin{equation}
\label{compactisom}
\iota : {\rm Hom}_{J_{\eta}}(W,\mathcal{F}(s,\widetilde{\eta})) \rightarrow \mathcal{H}(K,\check{\lambda}_{\eta})
\end{equation}
by
$  \iota (\phi)(k)(w)=\phi(w)(k)$ for  $\phi \in {\rm Hom}_{J_{\eta}}(W,\mathcal{F}(s,\widetilde{\eta})), k \in J_{\eta}KJ_{\eta}$, and $w \in W$.

\begin{lemma}$(${\rm Cf.} \cite[Lemma 3.2.6]{Kim16}$)$
The map $\iota$ is well defined and is a homomorphism of left $\mathcal{H}(K,\lambda_{\eta})-$modules.
\end{lemma}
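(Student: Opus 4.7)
The plan is to verify two things: that $\iota(\phi)$ genuinely lies in $\mathcal{H}(K, \check{\lambda}_{\eta})$ for every $\phi$, and that $\iota$ intertwines the relevant left actions of $\mathcal{H}(K,\lambda_{\eta})$. Because $W = \mathbb{C}_{\lambda_{\eta}}$ is one-dimensional, $\mathrm{End}_{\mathbb{C}}(\check{W}) \cong \mathbb{C}$ and the transposition $(\,\cdot\,)^{\vee}$ acts as the identity on scalars; throughout the verification I identify $\iota(\phi)$ with the scalar-valued function $k \mapsto \phi(w)(k)$ on $K$, taking $w$ to be a fixed nonzero element of $W$. This identification simplifies both halves of the argument.

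For well-definedness, the support of $\iota(\phi)$ is tautologically in $K$. Unwinding the $J_{\eta}$-equivariance of $\phi$ gives $\phi(w)(gj) = \lambda_{\eta}(j)\phi(w)(g)$, which is the required right transformation $\iota(\phi)(xk) = \iota(\phi)(x)\lambda_{\eta}(k)$ for $k \in J_{\eta}$. The content therefore lies in establishing the left transformation $\phi(w)(hx) = \lambda_{\eta}(h)\phi(w)(x)$ for $h \in J_{\eta}$ and $x \in K$. Using the decomposition $K = J_{\eta} \sqcup J_{\eta}{\rm w}_0 J_{\eta}$ implicit in the discussion preceding \Cref{BK-basis}, I split into two cases. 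When $x \in J_{\eta}$, the product $hx$ is already in $J_{\eta}$ and applying right-equivariance with base point $1$ yields the identity immediately. When $x \in J_{\eta}{\rm w}_0 J_{\eta}$, right-equivariance reduces the claim to
\[
  \phi(w)(j{\rm w}_0) = \lambda_{\eta}(j)\,\phi(w)({\rm w}_0), \qquad j \in J_{\eta}.
\]

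This last identity is the main obstacle, and I expect it to absorb most of the effort. I attack it via an explicit Bruhat-like decomposition. Writing $j = \overline{u}(y)\,\mathrm{diag}(a,a^{-1})\,u(z)$ through the Iwahori factorization \ref{Iwahori}, a direct matrix calculation shows
\[
  j\,{\rm w}_0 = u(\beta) \cdot {\rm w}_0 \cdot l, \qquad l = \begin{pmatrix} \gamma & -ya \\ 0 & \gamma^{-1} \end{pmatrix}, \quad \gamma = a^{-1}(1 + a^{2}yz) \in \mathfrak{o}_{F}^{\times},
\]
with $u(\beta) \in U$ and $l \in J_{\eta}$. Since $\widetilde{\eta}\nu_s\delta^{1/2}$ is trivial on $U$, the left $B$-equivariance of $\phi(w)$ together with the right $J_{\eta}$-equivariance collapses to $\phi(w)(j{\rm w}_0) = \eta(\gamma)\,\phi(w)({\rm w}_0)$. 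Because $n_{\eta}$ is chosen so that $1 + \mathfrak{p}_F^{n_{\eta}} \subset \ker\eta$, I find $\eta(\gamma) = \eta(a^{-1})\eta(1+a^{2}yz) = \eta(a)^{-1}$, whereas $\lambda_{\eta}(j) = \eta(a)$. The two scalars agree precisely because $\widetilde{\eta}^{2}=1$; this is the only step of the argument that genuinely invokes the quadratic hypothesis.

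For the module compatibility, I unwind both sides directly. The formula for the $\mathcal{H}(K,\lambda_{\eta})$-action on $V^{\lambda_{\eta}}$ together with right translation as the $G$-action on $\mathcal{F}(s,\widetilde{\eta})$ produces
\[
  \iota(C \star \phi)(k) = \int_{K} C(g)\,\iota(\phi)(kg)\, dg,
\]
the factor $C(g)^{\vee}$ collapsing to $C(g)$ by the scalar identification. A change of variables $g \mapsto k^{-1}g$ in the defining convolution $(\iota(\phi) \star \check{C})(k) = \int_G \iota(\phi)(g)\,\check{C}(g^{-1}k)\,dg$ yields the same integral, confirming $\iota(C\star\phi) = C \bullet \iota(\phi)$. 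The one-dimensionality of $\lambda_{\eta}$ is what makes this final step purely mechanical.
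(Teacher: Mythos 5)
Your proof is correct; the module-homomorphism half is essentially the paper's own computation (unwind $\pi(C)\phi$, identify $C(g)^{\vee}$ with the scalar $C(g)$, substitute $g\mapsto k^{-1}g$ to recognize $\iota(\phi)\star\check{C}$), but the well-definedness half takes a genuinely different route to the key identity $\phi(w)(j{\rm w}_0)=\lambda_{\eta}(j)\phi(w)({\rm w}_0)$. The paper orders the Iwahori factorization as $j=j_Uj_Tj_{\overline{U}}$, peels off $j_Uj_T$ from the \emph{left} using the $B$-equivariance of $\phi(w)\in\mathcal{F}(s,\widetilde{\eta})$ to produce $\lambda_{\eta}(j_T)=\lambda_{\eta}(j)$ directly, and then conjugates the leftover $j_{\overline{U}}$ across ${\rm w}_0$ into $J_{\eta}\cap U$, where $\lambda_{\eta}$ is trivial; the quadratic hypothesis is never invoked at this step. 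You instead take the ordering $\overline{U}\cdot T\cdot U$, push the whole of $j$ to the \emph{right} of ${\rm w}_0$ via the identity $j{\rm w}_0=u(\beta){\rm w}_0 l$ (which checks out: $l\in J_{\eta}$ and $\gamma=a^{-1}(1+a^{2}yz)\in\mathfrak{o}_F^{\times}$ since $y\in\mathfrak{p}_F^{n_{\eta}}$), and read the scalar off the right $J_{\eta}$-equivariance, obtaining $\eta(a)^{-1}$ where $\lambda_{\eta}(j)=\eta(a)$; the hypothesis $\widetilde{\eta}^{2}=1$ then closes the gap. This is not a defect but a feature: the tension between $\eta(a)$ and $\eta(a)^{-1}$ is already visible on $j=\mathrm{diag}(a,a^{-1})$, since ${\rm w}_0$-conjugation inverts the torus, and it is precisely the condition that ${\rm w}_0$ intertwine $\lambda_{\eta}$ --- equivalently, that $\iota(\phi)$ can be nonzero on $J_{\eta}{\rm w}_0J_{\eta}$ at all. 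So your argument makes explicit where non-splitness of the cover enters, at the cost of an extra matrix computation, while the paper's left-extraction keeps that fact implicit and is marginally shorter. Both rest on the same structural inputs: the Iwahori factorization, the triviality of $\lambda_{\eta}$ on $J_{\eta}\cap U$ and $J_{\eta}\cap\overline{U}$, and ${\rm w}_0^{-1}(J_{\eta}\cap\overline{U}){\rm w}_0\subset J_{\eta}\cap U$.
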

\begin{proof}
Let $j,j' \in J_{\eta}$ and ${\rm w} \in {\rm \bf W}$. Thanks to \ref{Iwahori}, we decompose $j=j_Uj_Tj_{\overline{U}}$ with $j_U \in J_{\eta} \cap U$, $j_T \in J_{\eta} \cap T$, and $j_{\overline{U}} \in J_{\eta} \cap \overline{U}$. We see that elements ${\rm w}^{-1}j_{\overline{U}}{\rm w}j'$ intertwine the character $\lambda_{\eta}$. With these in hand, we have
\begin{multline*}
 \iota(\phi)(j{\rm w}j')(w)=\lambda_{\eta}(j_T)  \phi(w)(j_{\overline{U}}{\rm w}j')=\lambda_{\eta}(j_T)(\rho({\rm w}^{-1}j_{\overline{U}}{\rm w}j')\phi)(w)({\rm w})\\
 =\lambda_{\eta}(j_T)\phi(\lambda_{\eta}({\rm w}^{-1}j_{\overline{U}}{\rm w}j')w)({\rm w}).
\end{multline*}
Appealing to \ref{kernel}, $\lambda_{\eta}({\rm w}^{-1}j_{\overline{U}}{\rm w})$ is trivial, and it becomes
\[
 \iota(\phi)(w)(j{\rm w}j')=\lambda_{\eta}(j_T)\phi(\lambda_{\eta}(j')w)({\rm w})=\lambda_{\eta}(j_T) \iota(\phi)({\rm w})\lambda_{\eta}(j')(w)
 =\lambda_{\eta}(j) \iota(\phi)({\rm w})\lambda_{\eta}(j')(w)
\]
having used the fact that $\lambda_{\eta}(j_T)=\lambda_{\eta}(j)$. We confirm that $\iota$ is well defined, that is to say, $\iota (\phi)$ belongs to $\mathcal{H}(K,\check{\lambda}_{\eta})$.

\par
It remains to show that $\iota$ is a homomorphism of left $\mathcal{H}(K,\lambda_{\eta})-$modules. Upon making the change of variables $g \mapsto k^{-1}g$, for $f \in \mathcal{H}(K,\lambda_{\eta})$,
$\iota(  \pi(f)\phi)(k)(w)$ equals to
\begin{multline*}
 ( \pi(f)\phi)(w)(k)=\int_G \rho(g)\phi(f(g)^{\vee}w)(k) \,dg=\int_G \phi(f(g)^{\vee}w)(kg) \,dg
 =\int_G \phi(\check{f}(g^{-1})w)(kg) \,dg
 \\
 =\int_G \iota(\phi)(kg)(\check{f}(g^{-1})w)\,dg
 =\int_G \iota(\phi)(g)(\check{f}(g^{-1}k)w)\,dg
 =( \iota( \phi) \star \check{f}  ) (k)(w)=(f \bullet \iota( \phi) )(k)(w)
 \end{multline*}
from which the desired conclusion follows. It is noteworthy that the support of all these integrals above is actually in $K$.
\end{proof}

The following proposition can be thought of as
the ${\rm SL}_2(F)$-analogue of \cite[Proposition 5.9]{JK21} and \cite[Lemma 3.2.9]{Kim16}. 

\begin{proposition}
\label{two-dim}
As left $\mathcal{H}(K,\lambda_{\eta})-$modules, $\mathcal{H}(K,\lambda_{\eta})$  is isomorphic to $\mathcal{F}(s,\widetilde{\eta})_{\lambda_{\eta}}$.
Consequently, the $\lambda_{\eta}$-isotypic subspace $\mathcal{F}(s,\widetilde{\eta})^{\lambda_{\eta}}$ is two dimensional with a $\mathbb{C}$-basis $\{  f_{{\rm I}_2,\widetilde{\eta}}, f_{{\rm w}_0,\widetilde{\eta}} \}$. 
\end{proposition}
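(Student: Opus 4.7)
The plan is to exploit the homomorphism $\iota$ of the previous lemma to transfer the problem onto $\mathcal{H}(K,\check{\lambda}_{\eta})$, whose dimension is already pinned down by \Cref{BK-basis}. More precisely, since $\lambda_{\eta}$ is one-dimensional, the canonical identification $V^{\lambda_{\eta}}\cong V_{\lambda_{\eta}}\cong \mathrm{Hom}_{J_{\eta}}(W,V)$ recalled from \cite[(2.13)]{BK98} lets me replace $\mathcal{F}(s,\widetilde{\eta})_{\lambda_{\eta}}$ by $\mathrm{Hom}_{J_{\eta}}(W,\mathcal{F}(s,\widetilde{\eta}))$; by the previous lemma, $\iota$ is an $\mathcal{H}(K,\lambda_{\eta})$-equivariant map from this into $\mathcal{H}(K,\check{\lambda}_{\eta})$. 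The latter is $2$-dimensional, and (applying the anti-isomorphism $f\mapsto \check{f}$) isomorphic as a left $\mathcal{H}(K,\lambda_{\eta})$-module to $\mathcal{H}(K,\lambda_{\eta})$, so it suffices to prove that $\iota$ is a linear isomorphism.

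For injectivity of $\iota$, I use that $G=BK$, which forces every $f\in\mathcal{F}(s,\widetilde{\eta})$ to be determined by its restriction to $K$; hence $\iota(\phi)=0$ forces $\phi(w)$ to vanish on $K$ and then by left $B$-equivariance on all of $G$. For surjectivity I construct an explicit inverse: given $F\in\mathcal{H}(K,\check{\lambda}_{\eta})$, set
\[
  \widetilde{\phi}(w)(bk)=(\widetilde{\eta}\nu_s\delta^{1/2})(b)\,F(k)(w),\qquad b\in B,\;k\in K.
\]
Well-definedness reduces to checking compatibility on $B\cap K$. Here $B\cap K=T(\mathfrak{o}_F)(U\cap K)\subset J_{\eta}$ by Iwahori factorization \ref{Iwahori}, and on this subgroup $\widetilde{\eta}\nu_s\delta^{1/2}$ collapses to $\eta$ on the diagonal and is trivial on $U\cap K$, which is exactly how $\lambda_{\eta}$ (equivalently $\check{\lambda}_{\eta}^{-1}$) behaves by \ref{kernel}. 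Smoothness and $J_{\eta}$-equivariance of $\widetilde{\phi}(\cdot)$ in $w$ then follow directly from the corresponding property of $F$, so $\widetilde{\phi}\in \mathrm{Hom}_{J_{\eta}}(W,\mathcal{F}(s,\widetilde{\eta}))$ and clearly $\iota(\widetilde{\phi})=F$.

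Having established the isomorphism, \Cref{BK-basis} immediately gives $\dim_{\mathbb{C}}\mathcal{F}(s,\widetilde{\eta})^{\lambda_{\eta}}=2$. To exhibit the stated basis, I first verify that $f_{{\rm I}_2,\widetilde{\eta}}$ and $f_{{\rm w}_0,\widetilde{\eta}}$ are genuinely well-defined elements of $\mathcal{F}(s,\widetilde{\eta})^{\lambda_{\eta}}$: the key input is the Bruhat-type decomposition $G=BJ_{\eta}\sqcup B{\rm w}_0 J_{\eta}$, which itself follows from $K=J_{\eta}\sqcup J_{\eta}{\rm w}_0 J_{\eta}$ (a consequence of the identification $J_{\eta}KJ_{\eta}=K$ recorded just before \Cref{BK-basis}) together with $G=BK$. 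On each piece, the compatibility on the overlap $B\cap J_{\eta}$ is the same agreement between $\widetilde{\eta}\nu_s\delta^{1/2}$ and $\lambda_{\eta}$ used above (for ${\rm w}_0$, after conjugating $J_{\eta}\cap\overline{U}$ across ${\rm w}_0$ into $U$). Because the two functions have disjoint supports inside the Bruhat stratification, they are linearly independent, and thus a $\mathbb{C}$-basis. The only delicate point, which I regard as the main technical obstacle, is precisely this compatibility check on $B\cap J_{\eta}$: one must track how $\nu_s$, $\delta^{1/2}$, and $\widetilde{\eta}$ behave on $T(\mathfrak{o}_F)$ and on $U\cap K$ and confirm that the product agrees with $\lambda_{\eta}$, so that the piecewise definitions of $f_{{\rm I}_2,\widetilde{\eta}}$, $f_{{\rm w}_0,\widetilde{\eta}}$, and of the inverse of $\iota$, are all unambiguous.
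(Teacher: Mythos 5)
Your proof is correct and follows essentially the same route as the paper: transfer the problem through the $\mathcal{H}(K,\lambda_{\eta})$-module map $\iota$ into $\mathcal{H}(K,\check{\lambda}_{\eta})$ and invoke \Cref{BK-basis}. The only cosmetic difference is that you establish surjectivity by writing down an explicit inverse and checking compatibility on $B\cap J_{\eta}$, whereas the paper gets both surjectivity and the basis statement at once from the single observation that $\iota(f_{{\rm w},\widetilde{\eta}})=\check{T}_{\rm w}$ for ${\rm w}\in{\rm \bf W}$.
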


\begin{proof}
To verify that \eqref{compactisom} is an isomorphism, it is sufficient to check that it takes a basis of $\mathcal{H}(K,\lambda_{\eta})$
to a basis of $\mathcal{F}(s,\widetilde{\eta})_{\lambda_{\eta}}$. 
The algebra $\mathcal{H}(K,\check{\lambda}_{\eta})$ is isomorphic to the left regular representation of $\mathcal{H}(K,\lambda_{\eta})$ via an isomorphism $\iota' :\mathcal{H}(K,\check{\lambda}_{\eta}) 
\rightarrow \mathcal{H}(K,\lambda_{\eta})$ given by $\iota'(D)=\check{D}$ for $D \in \mathcal{H}(K,\check{\lambda}_{\eta})$.
This is then immediate from Lemma \ref{BK-basis}, since 
$\iota( f_{{\rm w},\widetilde{\eta}})=\check{T}_{\rm w} $ for $\rm w \in {\rm \bf W}$.
\end{proof}

Since $U,\overline{U} \cong F$, we may identify the measure on $U$ and $\overline{U}$ with the additive measure $dx$ on $F$,
and we take $d^{\times} x$ to be $dx / |x|$ on $F^{\times}$. The Haar measure $dx$ is normalized so that $J_{\eta} \cap U \cong \mathfrak{o}_F$
has volume one. We define a standard $G$-intertwining operator $A(s,\chi) : \mathcal{F}(s,\chi) \rightarrow \mathcal{F}(-s,\chi^{-1})
$ by
\begin{equation}
\label{intertwining-integral}
   A(s,\chi,{\rm w}_0)(f)(g)=\int_{U} f({\rm w}_0 ug) \, du,
\end{equation}
for all $f \in  \mathcal{F}(s,\chi)$. The integral converges absolutely for ${\rm Re}(s) \gg 0$ and defines 
a rational function on a non-empty Zariski open subset of the complex torus $X(T)$. 
The character $\chi$ is said to be {\it regular} if $\chi \neq \chi^{-1}$. If $\iota_B^G(\chi\otimes\nu_s)$ is irreducible almost everywhere in $s$,
every $G$-morphism from $\mathcal{F}(s,\chi)$ to $\mathcal{F}(-s,\chi^{-1})$
is a scalar multiple of $ A(s,\chi,{\rm w}_0)$. We set $\epsilon=\widetilde{\eta}(-1)$. 
Our calculation is inspired from the idea in \cite{JK21}.

\begin{proposition}
\label{intertwining}
Let $\widetilde{\eta}$ be a non-trivial ramified quadratic character of $F^{\times}$ and assume that $\nu_s$ is regular.
Then we have
\[
 A(s,\widetilde{\eta},{\rm w}_0)(f_{{\rm I}_2,\widetilde{\eta}})=\epsilon \cdot {\rm vol}(\overline{U} \cap J_{\eta})f_{{\rm w}_0,\widetilde{\eta}^{-1}}=\epsilon q^{-n_{\eta}}f_{{\rm w}_0,\widetilde{\eta}^{-1}}
\]
and
\[
 A(s,\widetilde{\eta},{\rm w}^{-1}_0)(f_{{\rm w}_0,\widetilde{\eta}})=\epsilon \cdot {\rm vol}(U \cap J_{\eta})f_{{\rm I}_2,\widetilde{\eta}^{-1}}
= \epsilon f_{{\rm I}_2,\widetilde{\eta}^{-1}}.
\]
\end{proposition}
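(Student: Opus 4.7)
My strategy rests on the two-dimensional description of $\mathcal{F}(-s,\widetilde{\eta}^{-1})^{\lambda_\eta}$ supplied by \Cref{two-dim}. Since the basis vectors $f_{{\rm I}_2,\widetilde{\eta}^{-1}}$ and $f_{{\rm w}_0,\widetilde{\eta}^{-1}}$ are Kronecker-delta at the Weyl representatives $\{{\rm I}_2,{\rm w}_0\}$, to establish the identities it suffices to evaluate $A(s,\widetilde{\eta},{\rm w}_0)(f_{{\rm I}_2,\widetilde{\eta}})$ and $A(s,\widetilde{\eta},{\rm w}_0^{-1})(f_{{\rm w}_0,\widetilde{\eta}})$ at ${\rm I}_2$ and at ${\rm w}_0$, and then match the resulting scalars against the volume formulas on the right-hand side.

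For the first identity, I would carry out the two Bruhat-style decompositions
\[
  {\rm w}_0 u(x) = \begin{pmatrix} x^{-1} & -1 \\ 0 & x \end{pmatrix} \overline{u}(x^{-1}) \quad (x \neq 0),
  \qquad
  {\rm w}_0 u(x){\rm w}_0 = (-I)\,\overline{u}(-x),
\]
and combine them with the Iwahori factorisation \ref{Iwahori} to detect when the integrand of \eqref{intertwining-integral} lies in $BJ_\eta = B(J_\eta \cap \overline{U})$. The evaluation at $g={\rm w}_0$ is immediate: the support condition forces $-x \in \mathfrak{p}_F^{n_\eta}$, the characters $\nu_s$ and $\delta^{1/2}$ are trivial on $-I$, and one is left with $\widetilde{\eta}(-1)\cdot{\rm vol}(\mathfrak{p}_F^{n_\eta})=\epsilon\cdot{\rm vol}(\overline{U}\cap J_\eta)$. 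The evaluation at $g = {\rm I}_2$ yields, after the decomposition above, a Tate-type integral
\[
  \int_{|x|\geq q^{n_\eta}} \widetilde{\eta}(x)|x|^{-s}\,d^\times x,
\]
which I will show vanishes by slicing the domain into annuli $|x|=q^k$ with $k\leq -n_\eta$, pulling out the factor $q^{sk}\widetilde{\eta}(\varpi_F)^k$, and invoking $\int_{\mathfrak{o}_F^\times}\eta(u)\,d^\times u = 0$. Together these evaluations produce the first identity via the Kronecker-delta basis.

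The second identity is parallel. Writing ${\rm w}_0^{-1} = (-I){\rm w}_0$ and using the support $B{\rm w}_0 J_\eta = B{\rm w}_0 (J_\eta\cap U)$ of $f_{{\rm w}_0,\widetilde{\eta}}$ extracted from \ref{Iwahori}, the evaluation at ${\rm I}_2$ is straightforward and gives $\epsilon\cdot{\rm vol}(U\cap J_\eta)=\epsilon$, whereas the evaluation at ${\rm w}_0$ reduces, after the change of variable $y = x^{-1}$, to the same type of integral $\int_{|y|\leq 1}\widetilde{\eta}(y)|y|^{s}\,d^\times y$ of a ramified character and so vanishes by the identical argument. The only genuinely delicate step, and the point at which the non-split hypothesis really enters, is this vanishing: if $\widetilde{\eta}|_{\mathfrak{o}_F^\times}$ were trivial (i.e.\ $n_\eta=0$) the off-diagonal coefficients would survive as a Gindikin-Karpelevich-type local $L$-factor, recovering the split picture of \cite{KK, JK21}; the ramification of $\eta$ is precisely what forces the intertwining operator to act as a scalar on each basis vector.
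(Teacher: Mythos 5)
Your proposal is correct and follows essentially the same route as the paper's own proof: expand the image against the basis $\{f_{{\rm I}_2,\widetilde{\eta}^{-1}},f_{{\rm w}_0,\widetilde{\eta}^{-1}}\}$ from \Cref{two-dim}, determine the coefficients by evaluating at ${\rm I}_2$ and ${\rm w}_0$ via the decompositions \eqref{Iwasawa} and ${\rm w}_0u(x){\rm w}_0=(-{\rm I}_2)\overline{u}(-x)$, and kill the off-diagonal terms using $\int_{\mathfrak{o}_F^{\times}}\eta\,d^{\times}x=0$ for ramified $\eta$. The only detail worth adding is the paper's explicit remark that one first works in the range of absolute convergence ${\rm Re}(s)\gg 0$ and then extends meromorphically.
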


\begin{proof}
Since $\mathfrak{R}^{\mathfrak{s}_{\eta}}(G)$ is equivalent to $\mathcal{H}(G,\lambda_{\eta})-$Mod, we have
\[
  {\rm Hom}_{G}(\mathcal{F}(s,\widetilde{\eta}), \mathcal{F}(-s,\widetilde{\eta}^{-1}))
  \cong {\rm Hom}_{\mathcal{H}(G,\lambda_{\eta})}(\mathcal{F}(s,\widetilde{\eta})_{\lambda_{\eta}}, \mathcal{F}(-s,\widetilde{\eta}^{-1})_{\lambda_{\eta}}).
\]
Therefore, $A(s,\widetilde{\eta},{\rm w}_0)$ induces an intertwining map $\mathcal{F}(s,\widetilde{\eta})_{\lambda_{\eta}} \rightarrow \mathcal{F}(-s,\widetilde{\eta}^{-1})_{\lambda_{\eta}}$, which we by abuse of notation we will again denote by $A(s,\widetilde{\eta},{\rm w}_0)$. We prove equalities when the integral in \eqref{intertwining-integral} converges absolutely, that is to say, the real part of $s$ is sufficiently large,
and then extend meromorphically to the entire complex plane.

\par
Now it follows from Proposition \ref{two-dim} that $A(s,\widetilde{\eta},{\rm w}_0)(f_{ {\rm I}_2,\widetilde{\eta}})=a_{{\rm I}_2}f_{{\rm I}_2,\widetilde{\eta}^{-1}}+a_{{\rm w_0}}f_{{\rm w_0},\widetilde{\eta}^{-1}}$.
We may evaluate both sides of this equation at ${\rm w}_0$ to establish
\[
 a_{{\rm w}_0}=\int_{U} f_{{\rm I}_2,\widetilde{\eta}}({\rm w}_0 u{\rm w}_0) \, du
 =\widetilde{\eta}(-1)\int_{\overline{U}} f_{{\rm I}_2,\widetilde{\eta}}(\overline{u}) \, d\overline{u}
=\epsilon \cdot {\rm vol}(\overline{U} \cap J_{\eta}).
\]
For $x \neq 0$, we have the Iwasawa decomposition:
\begin{equation}
\label{Iwasawa}
  {\rm w}_0u(x)=\begin{pmatrix} x^{-1} & 0 \\ 0 & x \end{pmatrix} \begin{pmatrix} 1 & -x \\ 0 & 1 \end{pmatrix} \begin{pmatrix} 1 & 0 \\ x^{-1}& 1 \end{pmatrix}.
\end{equation}
In particular, for $x^{-1} \in \mathfrak{p}_F^{n_{\eta}}$, we have $f_{{\rm I}_2,\widetilde{\eta}}({\rm w}_0u(x))=\widetilde{\eta}\nu_s\delta^{1/2}(x^{-1})$.
We may rewrite $x \in F^{\times}$ as $x=\varpi_F^ky$ with $k \in \mathbb{Z}$ and $y \in \mathfrak{o}_F^{\times}$, and then use the relation of additive Haar measure $d(cx)=|c|dx$ for $c \in F^{\times}$ to determine $ a_{{\rm I}_2}$:
\begin{multline}
\label{a1-evaluation}
 a_{{\rm I}_2}=\int_{U} f_{{\rm I}_2,\widetilde{\eta}}({\rm w}_0 u) \, du
 =\int_{\{x\,|\, x\neq 0,\; x^{-1} \in \mathfrak{p}_F^{n_{\eta}}\}} \widetilde{\eta}(x^{-1})|x|^{-s-1} \,dx\\
 =\int_{|x| \geq q^{n_{\eta}}} \widetilde{\eta}(x^{-1})|x|^{-s-1} \,dx
 =\sum_{n=-\infty}^{-n_{\eta}} \eta(\varpi_F^{-n})|\varpi_F^{-n}|^{-s}\int_{\mathfrak{o}_F^{\times}}\widetilde{\eta}(x) \,d^{\times}x.
\end{multline}
 But $\widetilde{\eta}$ being ramified implies that the series of equality of above integrals is zero. In other words, $ a_{{\rm I}_2}=0$.

 \par
 Let us turn our attention to $A(s,\widetilde{\eta},{\rm w}^{-1}_0)(f_{{\rm w}_0,\widetilde{\eta}})$. Owing to \Cref{two-dim}, we know that 
 \[
 A(s,\widetilde{\eta},{\rm w}^{-1}_0)(f_{{\rm w}_0,\widetilde{\eta}})=b_{{\rm I}_2}f_{{\rm I}_2,\widetilde{\eta}^{-1}}+b_{{\rm w_0}}f_{{\rm w_0},\widetilde{\eta}^{-1}}.
 \]
 Our aim is to determine unknown coefficients $b_{{\rm I}_2}$ and $b_{{\rm w}_0}$ precisely. To this end, we evaluate both sides of the above equation at ${\rm I}_2$.
 The test function $f_{{\rm w}_0,\widetilde{\eta}}$ is supported on $B{\rm w}_0J_{\eta}$ so that
 \[
  b_{{\rm I}_2}=\int_{U}f_{{\rm w}_0,\widetilde{\eta}}   ({\rm w}^{-1}_0 u) \, du=\widetilde{\eta}(-1)\int_{U}f_{{\rm w}_0,\widetilde{\eta}}   ({\rm w}_0 u) 
  =\epsilon\cdot {\rm vol}(U \cap J_{\eta}),
 \]
 thereby providing $b_{{\rm I}_2}=\widetilde{\eta}(-1)$. Just as in \eqref{a1-evaluation}, we use the Iwasawa decomposition \eqref{Iwasawa} (cf. \eqref{Bruhat}) to arrive at
\begin{multline*}
  b_{{\rm w}_0}=\int_{U} f_{{\rm I}_2,\widetilde{\eta}}({\rm w}_0^{-1} u {\rm w}_0) \, du
 =\int_{\{x\,|\,x\neq 0,\; x^{-1} \in \mathfrak{o}_F \}} \widetilde{\eta}(x^{-1})|x|^{-s-1} \,dx \\
 =\sum_{n \leq 0} \eta(\varpi_F^{-n})|\varpi_F^{-n}|^{-s}\int_{\mathfrak{o}_F^{\times}}\widetilde{\eta}(x) \,d^{\times}x.
\end{multline*}
Once again, $\widetilde{\eta}$ being ramified forces that the innermost integral over $\mathfrak{o}_F^{\times}$ is equal to zero, which yields $b_{{\rm w}_0}=0$, as requested.
\end{proof}

The {\it Plancherel constant} is a scalar valued function $\mu(s,\widetilde{\eta}) \in \mathbb{C}$ attached to $\widetilde{\eta}$ is by the defining relation \cite[\S 6]{Sha84}
\[
 A(-s,\widetilde{\eta}^{-1},{\rm w}^{-1}_0) \circ A(s,\widetilde{\eta},{\rm w}_0)=\mu(s,\widetilde{\eta})^{-1} \cdot id_{\mathcal{F}(s,\widetilde{\eta})}
\]
on a Zariski open dense subset of $\mathbb{C}$. It is a rational function in $q^{-s}$ and clearly depends on the measure defining intertwining operators. 

\begin{theorem}[The Plancherel Constant I]
\label{Plancherel}
 Let $\widetilde{\eta}$ be a non-trivial ramified quadratic character of $F^{\times}$ of level $n_{\eta}$ with $\eta=\widetilde{\eta} \restriction_{\mathfrak{o}^{\times}_F}$. Then we have
\[
  \mu(s,\widetilde{\eta})={\rm vol}(\overline{U} \cap J_{\eta})^{-1}{\rm vol}(U \cap J_{\eta})^{-1}=q^{n_{\eta}}.
\]
\end{theorem}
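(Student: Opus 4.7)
The plan is to read off $\mu(s,\widetilde{\eta})$ by composing the two intertwining operators on a convenient test vector. By Proposition \ref{two-dim}, $\mathcal{F}(s,\widetilde{\eta})^{\lambda_\eta}$ is two-dimensional with basis $\{f_{{\rm I}_2,\widetilde{\eta}},\, f_{{\rm w}_0,\widetilde{\eta}}\}$, and whenever $\widetilde{\eta}\otimes\nu_s$ is regular the composition $A(-s,\widetilde{\eta}^{-1},{\rm w}_0^{-1}) \circ A(s,\widetilde{\eta},{\rm w}_0)$ is a scalar operator on $\mathcal{F}(s,\widetilde{\eta})$ by Schur's lemma applied to the irreducible principal series. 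It therefore suffices to evaluate this composition on a single basis vector and compare with the defining relation.

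Concretely, I first apply $A(s,\widetilde{\eta},{\rm w}_0)$ to $f_{{\rm I}_2,\widetilde{\eta}}$ and invoke the first formula of Proposition \ref{intertwining} to obtain $\epsilon\cdot{\rm vol}(\overline{U}\cap J_\eta)\, f_{{\rm w}_0,\widetilde{\eta}^{-1}}$. Next, I apply $A(-s,\widetilde{\eta}^{-1},{\rm w}_0^{-1})$ to this function; since $\widetilde{\eta}$ is quadratic one has $\widetilde{\eta}^{-1}=\widetilde{\eta}$, so the second formula of Proposition \ref{intertwining}, read with $(s,\widetilde{\eta})$ replaced by $(-s,\widetilde{\eta}^{-1})$, contributes a further factor of $\epsilon\cdot{\rm vol}(U\cap J_\eta)$ times $f_{{\rm I}_2,\widetilde{\eta}}$. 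Using $\epsilon^2=\widetilde{\eta}(-1)^2=1$, the composite acts on $f_{{\rm I}_2,\widetilde{\eta}}$ by the scalar ${\rm vol}(\overline{U}\cap J_\eta)\cdot{\rm vol}(U\cap J_\eta)$, so comparing with the defining relation yields
\[
\mu(s,\widetilde{\eta})^{-1} = {\rm vol}(\overline{U}\cap J_\eta)\cdot{\rm vol}(U\cap J_\eta).
\]

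The numerical evaluation then comes from the Haar-measure normalization together with the Iwahori factorization of $J_\eta$: the unipotent radical intersection $J_\eta \cap U \cong \mathfrak{o}_F$ has volume one by our choice of measure, while $J_\eta \cap \overline{U} \cong \mathfrak{p}_F^{n_\eta}$ has volume $q_F^{-n_\eta}$. Substituting these gives $\mu(s,\widetilde{\eta}) = q_F^{n_\eta}$, as claimed.

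There is essentially no real obstacle — Proposition \ref{intertwining} has already done the delicate work of evaluating the intertwining integrals on the basis of $\mathcal{F}(s,\widetilde{\eta})^{\lambda_\eta}$. The only minor technical subtlety is that those action formulas were derived in the range of absolute convergence and must be composed as meromorphic functions on the Zariski-dense open locus where $\widetilde{\eta}\otimes\nu_s$ is regular; but since the resulting scalar $q_F^{n_\eta}$ is independent of $s$, the meromorphic continuation poses no difficulty.
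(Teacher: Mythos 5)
Your argument is correct and is exactly the paper's proof: the paper simply says ``apply Proposition \ref{intertwining} twice,'' and you carry out precisely that composition, with the same bookkeeping of the scalars $\epsilon\cdot{\rm vol}(\overline{U}\cap J_{\eta})$ and $\epsilon\cdot{\rm vol}(U\cap J_{\eta})$ and the cancellation $\epsilon^{2}=1$. The added remarks on the measure normalization and on working on the regular locus before meromorphic continuation are consistent with the conventions already set up in \S\ref{sec-intertwining}.
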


\begin{proof}
We apply Proposition \ref{intertwining} twice.
\end{proof}

The Plancherel constant in Theorem \ref{Plancherel} coincides with that in split cases \cite[\S 3.3]{KK} on the common Zariski open dense subset of $\mathbb{C}$ (Refer to \cite[Theorem 4.5]{KM09} for the precise description), and 
this recovers the work by Kutzko and Morris \cite[Theorem 4.5.(2)-(ii)]{KM09}.

\section{Local Coefficients and Gelfand-Graev Representations}
\label{sec3-Whittaker}

\subsection{The Local Coefficient}
\label{SL2-LC}

Let $\psi$ be a non-trivial additive character of $F$ trivial on $\mathfrak{o}_F$ but not on $\mathfrak{p}_F^{-1}$, and $\chi$ a character of $F^{\times}$.
It is a theorem of Rodier \cite{Rod72} that the dimension of the space of $\psi$-Whittaker functionals on $\mathcal{F}(s,\chi)$ 
is one. We may define a basis vector $\Omega_s$ for the $\psi$-Whittaker functionals on $\mathcal{F}(s,\chi)$  by the formula
\[
  \Omega_s(f)=\int_{U} f({\rm w}_0 u) \psi^{-1}(u) \,du.
\]
This integral may not converge for all $f$ but can be extended to the whole space as a principal value integral.
We also have the following convenient reinterpretation for $\Omega_s$ as a principal value integral; Given a compact open subgroup
$K_{\ast}$ of $G$, there exists a suitably large compact open subgroup $U_{\ast} \subset U$ such that
\[
 \Omega_s(f)=\int_{U_{\ast}} f({\rm w}_0 u) \psi^{-1}(u) \,du
\]
for all $s$ and for all $f \in \mathcal{F}(s,\chi)^{K_{\ast}}$. We similarly define $\Omega'_s$ on  $\mathcal{F}(-s,\chi^{-1})$
via
\[
  \Omega'_s(f)=\int_{U} f({\rm w}_0 u) \psi^{-1}(u) \,du
\]
as a principal value integral in the above sense. Appealing to the aforementioned result of Rodier \cite{Rod72}, there exists a non-zero constant $C_{\psi}(s,\chi)$
called the {\it Langlnads-Shahidi local coefficient} satisfying
\begin{equation}
\label{LS-Coefficients}
 C_{\psi}(s,\chi)( \Omega'_s \circ A(s,\chi,{\rm w}_0))= \Omega_s.
\end{equation}
For $c \in F$, the {\it Gauss sum} attached to $\chi$ is defined by
\[
  \tau(\chi,\psi,c)=\int_{\mathfrak{o}_F^{\times}} \chi(x^{-1}) \overline{\psi(cx)} \, dx.
\]

\begin{theorem}[The Local Coefficient]
\label{LocalCoefficient}
 Let $\widetilde{\eta}$ be a non-trivial ramified quadratic character of $F^{\times}$ of level $n_{\eta}$ with $\eta=\widetilde{\eta} \restriction_{\mathfrak{o}^{\times}_F}$. 
Then we have
\[
  C_{\psi}(s,\widetilde{\eta})=\widetilde{\eta}(-\varpi_F^{n_{\eta}})\tau(\eta,\psi,\varpi_F^{-n_{\eta}})q^{-n_{\eta}(s-1)}.
\]
\end{theorem}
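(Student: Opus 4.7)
The plan is to evaluate the defining identity \eqref{LS-Coefficients} on the test vector $f_{{\rm I}_2,\widetilde{\eta}}$, whose image under the intertwining operator is already computed in \Cref{intertwining}. Concretely, since
\[
A(s,\widetilde{\eta},{\rm w}_0)(f_{{\rm I}_2,\widetilde{\eta}}) = \epsilon\, q_F^{-n_\eta} f_{{\rm w}_0,\widetilde{\eta}^{-1}},
\]
the relation \eqref{LS-Coefficients} reduces to
\[
C_\psi(s,\widetilde{\eta})\,\epsilon\, q_F^{-n_\eta}\,\Omega'_s(f_{{\rm w}_0,\widetilde{\eta}^{-1}}) \;=\; \Omega_s(f_{{\rm I}_2,\widetilde{\eta}}),
\]
so the entire theorem comes down to computing the two Whittaker integrals on the right and the left.

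For $\Omega'_s(f_{{\rm w}_0,\widetilde{\eta}^{-1}})$, I would exploit the uniqueness of the decomposition $g=b{\rm w}_0u$ on the big Bruhat cell, which forces ${\rm w}_0 u(x)\in B{\rm w}_0 J_\eta$ if and only if $x\in\mathfrak{o}_F$. On that region $f_{{\rm w}_0,\widetilde{\eta}^{-1}}({\rm w}_0 u(x))=1$ and $\psi^{-1}$ is trivial, so the integral collapses to ${\rm vol}(\mathfrak{o}_F)=1$. For $\Omega_s(f_{{\rm I}_2,\widetilde{\eta}})$, I would use the Iwasawa decomposition \eqref{Iwasawa} exactly as in \eqref{a1-evaluation}: the support is $\{x:|x|\geq q_F^{n_\eta}\}$, and with the substitution $x=\varpi_F^ky$, $y\in\mathfrak{o}_F^\times$, the integral becomes
\[
\Omega_s(f_{{\rm I}_2,\widetilde{\eta}}) \;=\; \sum_{k\leq -n_\eta} \widetilde{\eta}(\varpi_F^{-k})\,q_F^{ks}\, \tau\!\bigl(\eta,\psi,\varpi_F^k\bigr).
\]
The ramification of $\eta$ of level $n_\eta$ forces the Gauss sum $\tau(\eta,\psi,\varpi_F^k)$ to vanish unless $k=-n_\eta$, so only a single term survives, giving $\widetilde{\eta}(\varpi_F^{n_\eta})\,q_F^{-n_\eta s}\,\tau(\eta,\psi,\varpi_F^{-n_\eta})$.

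Combining these two computations and solving for $C_\psi(s,\widetilde{\eta})$, and using that $\widetilde{\eta}^2=1$ so $\epsilon^{-1}=\widetilde{\eta}(-1)$, collapses everything into
\[
C_\psi(s,\widetilde{\eta}) \;=\; \widetilde{\eta}(-1)\,\widetilde{\eta}(\varpi_F^{n_\eta})\,q_F^{-n_\eta(s-1)}\,\tau(\eta,\psi,\varpi_F^{-n_\eta}),
\]
which is the claimed formula. The main obstacle I anticipate is not algebraic but bookkeeping: justifying that the principal-value integral reduces to the absolutely convergent integral on a sufficiently small $K_\ast$-fixed subspace (so that the Iwasawa substitution is legal) and being precise with the volume normalizations of $U\cap J_\eta$, $\overline{U}\cap J_\eta$ and $d^\times$ versus $dx$ so that the Gauss sum factor comes out with the stated normalization $\tau(\eta,\psi,\varpi_F^{-n_\eta})$ rather than a shifted version. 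Once those normalizations are pinned down, the rest is a direct application of \Cref{intertwining} and the conductor-vanishing of Gauss sums.
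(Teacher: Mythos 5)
Your proposal is correct and follows essentially the same route as the paper: evaluate \eqref{LS-Coefficients} at $f_{{\rm I}_2,\widetilde{\eta}}$, use \Cref{intertwining} for the left-hand side, reduce $\Omega'_s(f_{{\rm w}_0,\widetilde{\eta}^{-1}})$ to ${\rm vol}(\mathfrak{o}_F)=1$, and compute $\Omega_s(f_{{\rm I}_2,\widetilde{\eta}})$ by decomposing the support into shells $\varpi_F^k\mathfrak{o}_F^{\times}$, where only $k=-n_\eta$ survives. Your explicit appeal to the conductor-vanishing of the Gauss sums $\tau(\eta,\psi,\varpi_F^k)$ for $k<-n_\eta$ is exactly the mechanism the paper invokes (via \cite[Proposition 3.1]{KK}) when it asserts that only the last shell $\mathcal{D}_{-n_\eta}$ contributes.
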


\begin{proof} Since $C_{\psi}(s,\widetilde{\eta})$ is a rational function in $\mathbb{C}(q^{-s})$, it suffices to prove the assertion on a Zariski open dense subset of $X(T)$.
In particular, we impose the assumption that $\mathcal{F}(s,\widetilde{\eta})$ and $\mathcal{F}(-s,\widetilde{\eta}^{-1})$ are all irreducible.
Our argument is rather close in spirit to the work by Krishnamurthy and Kutzko \cite[Proposition 3.1]{KK}. Evaluating one side of \eqref{LS-Coefficients} at $f_{{\rm I}_2,\widetilde{\eta}}$ is straightforward, since the function $f_{{\rm w}_0,\widetilde{\eta}}$ is supported on $B{\rm w}_0J_{\eta}$.
Indeed we see that
\[
  {\rm w}_0 U \cup B{\rm w}_0 J_{\eta}= {\rm w}_0 U \cup {\rm w}_0 \overline{B} J_{\eta}={\rm w}_0(U \cap \overline{B}J_{\eta})={\rm w}_0(U \cap J_{\eta})
\]  
from which it follows that
\begin{multline*}
 ( \Omega'_s \circ A(s,\widetilde{\eta},{\rm w}_0))(f_{{\rm I}_2,\widetilde{\eta}})=\widetilde{\eta}(-1){\rm vol}(\overline{U} \cap J_{\eta})\Omega'_s(f_{{\rm w}_0,\widetilde{\eta}^{-1}}) \\
 =\widetilde{\eta}(-1){\rm vol}(\overline{U} \cap J_{\eta})\int_{J_{\eta} \cap U} f_{{\rm w}_0,\widetilde{\eta}^{-1}}({\rm w}_0 u) \psi^{-1}(u) \,du
  =\widetilde{\eta}(-1){\rm vol}(\overline{U} \cap J_{\eta})\int_{J_{\eta} \cap U}  \psi^{-1}(u) \,du.
\end{multline*}
We know that $\psi$ is trivial on $J_{\eta} \cap U$, and consequently, 
\begin{equation}
\label{Side}
 ( \Omega'_s \circ A(s,\widetilde{\eta},{\rm w}_0))(f_{{\rm I}_2,\widetilde{\eta}})=\widetilde{\eta}(-1){\rm vol}(\overline{U} \cap J_{\eta}){\rm vol}(U \cap J_{\eta})=\widetilde{\eta}(-1)q^{-n_{\eta}}.
 \end{equation}
 This brings us to the central issue of computing the other side $\Omega_s(f_{{\rm I}_2,\widetilde{\eta}})$. In contrast to  $f_{{\rm w}_0,\widetilde{\eta}}$, 
 this is not immediate, because $f_{{\rm I}_2,\widetilde{\eta}}$ is supported near the identity element.
 To this end, we deduce from \eqref{Iwasawa} that $f_{{\rm I}_2,\widetilde{\eta}}({\rm w}_0u(x))$ is $\widetilde{\eta}\nu_s\delta^{1/2}(x^{-1})$, if $x^{-1} \in \mathfrak{p}_F^{n_{\eta}}$, and $0$, otherwise. Then $\Omega_s(f_{{\rm I}_2,\widetilde{\eta}})$ equals to
 \[
  \int_{\{ \mathfrak{p}_F^{-m}-\{0\} \} \cap \{ x\,|\, x^{-1} \in \mathfrak{p}_F^{n_{\eta}}\}} \widetilde{\eta}(x^{-1}) |x|^{-s-1} \psi^{-1}(x) \,dx
 \]
 for some large positive integer $m \gg 0$. For convenience, let $\mathcal{D}$ denote 
 \[
 \{ \mathfrak{p}_F^{-m}-\{0\} \} \cap \{ x\,|\, x^{-1} \in \mathfrak{p}_F^{n_{\eta}} \}.
 \]
 For any integer $r$, let $\mathcal{D}_r$ denote the {\it shell},
$   \{ x \in \varpi_F^r\mathfrak{o}_F^{\times} \,|\,  x^{-1} \in \mathfrak{p}_F^{n_{\eta}}  \}$.
Then our domain of the integration can be decomposed as  shells $\mathcal{D}=\cup_{-m \leq r \leq -n_{\eta}} \mathcal{D}_r$.
The crux of the proof of \cite[Proposition 3.1]{KK} is that only the last shell $\mathcal{D}_{-n_{\eta}}=\varpi_F^{-n_{\eta}}\mathfrak{o}_F^{\times}$ contributes to $\Omega_s(f_{{\rm I}_2,\widetilde{\eta}})$. Assembling all of this information, we achieve
\begin{multline}
\label{DualSide}
 \Omega_s(f_{{\rm I}_2,\widetilde{\eta}})=\int_{\varpi_F^{-n_{\eta}}\mathfrak{o}_F^{\times}}  \widetilde{\eta}(x^{-1}) |x|^{-s} \psi^{-1}(x) \,d^{\times}x
 =\widetilde{\eta}(\varpi_F^{-n_{\eta}})q^{-n_{\eta}s} \int_{\mathfrak{o}_F^{\times}} \eta(x^{-1}) \psi^{-1}(\varpi_F^{-n_{\eta}} x) d^{\times} x \\
 =\widetilde{\eta}(\varpi_F^{-n_{\eta}})q^{-n_{\eta}s}\tau(\eta,\psi,\varpi_F^{-n_{\eta}}).
\end{multline}
We can draw the conclusion from \eqref{Side} combined with \eqref{DualSide}.
\end{proof}

This result should be compared with the work of Shahidi \cite[Lemma 4.4]{Sha78}. In addition, the local coefficient coincides with the corresponding Hecke--Tate local $\gamma$-factor \cite{Tat77},
which can be viewed as Bump and Friedberg exterior square local factors for ${\rm GL}_1(F) \cong F^{\times}$ (cf. \cite[Proof of Theorem 5.4]{Mat15}). Indeed, Theorem \ref{LocalCoefficient} confirms Conjecture 4 in \cite{BF89}.

\par
Let $P(X) \in \mathbb{C}[X]$ be the unique polynomial satisfying $P(0)=1$ such that $P(q^{-s})$ is the numerator of $C_{\psi}(s,\chi)$.
Whenever $\chi$ is unitary, the local $L$-factor is defined by
\[
  L(s,\chi):=\frac{1}{P(q^{-s})}.
\]
The local  $\varepsilon$-factor is defined to satisfy the relation:
\[
  C_{\psi}(s,\chi)=\varepsilon(s,\chi,\psi)\frac{L(1-s,\chi^{-1})}{L(s,\chi)}.
\]

\begin{corollary}[Local Factors] Let $\widetilde{\eta}$ be a non-trivial ramified quadratic character of $F^{\times}$. Then we have
\[
  \varepsilon(s,\widetilde{\eta},\psi)= C_{\psi}(s,\widetilde{\eta}) \quad \text{and} \quad  L(s,\widetilde{\eta})=1.
\]
\end{corollary}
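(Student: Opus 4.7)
The plan is to read both local factors directly off the explicit formula for $C_{\psi}(s,\widetilde{\eta})$ furnished by \Cref{LocalCoefficient}. Setting $X = q_F^{-s}$, the theorem rewrites as
\[
 C_{\psi}(s,\widetilde{\eta}) = \widetilde{\eta}(-\varpi_F^{n_{\eta}})\,\tau(\eta,\psi,\varpi_F^{-n_{\eta}})\,q_F^{n_{\eta}}\,X^{n_{\eta}},
\]
so $C_{\psi}(s,\widetilde{\eta})$ is a nonzero scalar times a pure monomial in $X$. The scalar is non-vanishing, since $\tau(\eta,\psi,\varpi_F^{-n_{\eta}})$ is a non-degenerate Gauss sum attached to the ramified character $\eta$. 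Consequently the rational function $C_{\psi}(s,\widetilde{\eta})$ carries no non-constant polynomial factor in its numerator: the unique polynomial $P(X)$ with $P(0)=1$ singled out in the preamble to the corollary is forced to be $P\equiv 1$, because by the normalization $P(0)=1$ any monomial factor $X^{n_{\eta}}$ is absorbed into the $\varepsilon$-factor rather than into $L$. Hence $L(s,\widetilde{\eta})=1/P(q_F^{-s})=1$.

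Next, since $\widetilde{\eta}^{2}=1$ we have $\widetilde{\eta}^{-1}=\widetilde{\eta}$, which remains a non-trivial ramified quadratic character of $F^{\times}$. The same argument applied verbatim (with $s$ replaced by $1-s$ in the defining relation, or more simply noting that the conclusion $L(\,\cdot\,,\widetilde{\eta})\equiv 1$ is uniform in its complex argument) yields $L(1-s,\widetilde{\eta}^{-1})=1$.

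Substituting these two evaluations into the defining relation
\[
 C_{\psi}(s,\widetilde{\eta}) = \varepsilon(s,\widetilde{\eta},\psi)\,\frac{L(1-s,\widetilde{\eta}^{-1})}{L(s,\widetilde{\eta})}
\]
collapses the quotient on the right to $1$ and delivers $\varepsilon(s,\widetilde{\eta},\psi)=C_{\psi}(s,\widetilde{\eta})$, completing both claims simultaneously.

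The only substantive conceptual point, and the one I would take care to spell out in the write-up, is the parsing of the $P(0)=1$ normalization so that the monomial factor $X^{n_{\eta}}$ present in $C_{\psi}(s,\widetilde{\eta})$ is attributed to $\varepsilon$ rather than to $L$; once this convention is correctly enforced, both equalities are purely formal consequences of \Cref{LocalCoefficient} and the self-duality $\widetilde{\eta}^{-1}=\widetilde{\eta}$.
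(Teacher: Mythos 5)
Your derivation is correct and is exactly the argument the paper intends: the corollary is stated without proof as an immediate consequence of Theorem~\ref{LocalCoefficient}, since $C_{\psi}(s,\widetilde{\eta})$ is a nonzero constant (the Gauss sum is non-vanishing, cf.\ the identity $|\tau(\eta,\psi,\varpi_F^{-n_{\eta}})|^2=q_F^{-n_{\eta}}$ used in Corollary~\ref{SquareRoot}) times the monomial $q_F^{n_{\eta}}X^{n_{\eta}}$, forcing $P\equiv 1$ under the normalization $P(0)=1$ and hence $L(s,\widetilde{\eta})=L(1-s,\widetilde{\eta}^{-1})=1$ and $\varepsilon=C_{\psi}$. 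Your explicit remark that the monomial $X^{n_{\eta}}$ must be attributed to the $\varepsilon$-factor rather than to $L$ is the right reading of the convention and a worthwhile clarification, but it is not a departure from the paper's route.
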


Corollaries \ref{FE} and \ref{SquareRoot} match with the corresponding formul\ae{} \cite[Proposition 3.1.1]{Sha81} and \cite[\S Introduction]{Sha84}. 

\begin{corollary}[The Functional Equation] 
\label{FE}
 Let $\widetilde{\eta}$ be a non-trivial ramified quadratic character of $F^{\times}$. Then we have
\[
 C_{\psi}(s,\widetilde{\eta})C_{\psi}(1-s,\widetilde{\eta}^{-1})=\epsilon.
 \]
\end{corollary}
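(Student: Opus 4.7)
The plan is to substitute the explicit formula from \Cref{LocalCoefficient} into both factors and reduce the identity to the classical Gauss sum square formula for a quadratic character. Since $\widetilde{\eta}^{2}=1$, we have $\widetilde{\eta}^{-1}=\widetilde{\eta}$ and hence $n_{\eta^{-1}}=n_{\eta}$, so \Cref{LocalCoefficient} yields
\[
C_{\psi}(1-s,\widetilde{\eta}^{-1})=\widetilde{\eta}(-\varpi_F^{n_\eta})\,\tau(\eta,\psi,\varpi_F^{-n_\eta})\,q_F^{n_\eta s}.
\]
Multiplying by $C_{\psi}(s,\widetilde{\eta})$, the character factor $\widetilde{\eta}(-\varpi_F^{n_\eta})^{2}$ equals $1$ by quadraticity, and the exponentials combine as $q_F^{-n_\eta(s-1)}\cdot q_F^{n_\eta s}=q_F^{n_\eta}$. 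Thus the desired identity is equivalent to the Gauss sum evaluation
\[
\tau(\eta,\psi,\varpi_F^{-n_\eta})^{2}=\epsilon\, q_F^{-n_\eta}.
\]

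To establish this Gauss sum identity, I would expand the square as a double integral over $\mathfrak{o}_F^{\times}\times\mathfrak{o}_F^{\times}$, substitute $y=xt$, and use $\eta(x^{-2})=1$ (again by quadraticity) to reach
\[
\tau(\eta,\psi,\varpi_F^{-n_\eta})^{2}=\int_{\mathfrak{o}_F^{\times}}\eta(t^{-1})\!\left(\int_{\mathfrak{o}_F^{\times}}\psi\!\bigl(-\varpi_F^{-n_\eta}x(1+t)\bigr)\,dx\right)dt.
\]
The inner integral of $\psi(ax)$ over $\mathfrak{o}_F^{\times}$ equals $1-q_F^{-1}$ when $v(a)\geq 0$, equals $-q_F^{-1}$ when $v(a)=-1$, and vanishes otherwise. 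Only the two shells $v(1+t)\geq n_\eta$ and $v(1+t)=n_\eta-1$ therefore contribute; the substitution $t=-1+u$ produces a global factor of $\eta(-1)$, and the defining property of the conductor (namely $\eta$ is trivial on $1+\mathfrak{p}_F^{n_\eta}$ but nontrivial on $1+\mathfrak{p}_F^{n_\eta-1}$) forces the integral of $\eta$ over the second shell to be $-q_F^{-n_\eta}$. Summing the two contributions collapses the $q_F^{-1}$ pieces and produces $\eta(-1)q_F^{-n_\eta}=\epsilon\, q_F^{-n_\eta}$, as required.

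The main obstacle is the careful bookkeeping of the sign $\eta(-1)$ through the substitution $t=-1+u$; the quadratic hypothesis is essential here both to collapse $\eta(t^{-1})\mapsto\eta(-1)\eta(1-u)$ after the shift and to guarantee $\widetilde{\eta}(-\varpi_F^{n_\eta})^2=1$ at the outset. A conceptually cleaner alternative is to invoke the immediately preceding corollary on local factors to identify $C_{\psi}(s,\widetilde{\eta})$ with Tate's local $\varepsilon$-factor, and then to appeal to the classical Hecke-Tate functional equation $\varepsilon(s,\chi,\psi)\,\varepsilon(1-s,\chi^{-1},\psi)=\chi(-1)$ specialized to $\chi=\widetilde{\eta}$.
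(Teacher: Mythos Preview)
Your computation is correct, and the alternative you mention at the end is exactly what the paper does: it identifies $C_{\psi}(s,\widetilde{\eta})$ with $\varepsilon(s,\widetilde{\eta},\psi)$ via the preceding corollary on local factors and then quotes the classical functional equation $\varepsilon(s,\chi,\psi)\,\varepsilon(1-s,\chi^{-1},\psi)=\chi(-1)$ from \cite[Corollary 23.4.2]{BH06}.

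Your primary route is genuinely different and more self-contained. You substitute the explicit formula of \Cref{LocalCoefficient} directly and reduce the statement to the Gauss-sum identity $\tau(\eta,\psi,\varpi_F^{-n_\eta})^{2}=\epsilon\,q_F^{-n_\eta}$, which you then prove by hand via the shell decomposition. This avoids any appeal to Tate's theory and keeps the argument internal to the paper's own computations; the price is the somewhat delicate bookkeeping at the shell $v(1+t)=n_\eta-1$ (and the small boundary subtlety when $n_\eta=1$, where one must exclude $u\in 1+\mathfrak{p}_F$ to stay inside $\mathfrak{o}_F^{\times}$, though the same conductor argument still yields $-q_F^{-n_\eta}$). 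The paper's approach is shorter because it outsources precisely this Gauss-sum calculation to the standard $\varepsilon$-factor formalism; your approach makes that hidden input explicit.
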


\begin{proof}
The functional equation for the local constant $ \varepsilon(s,\widetilde{\eta},\psi)$ (cf. \cite[Corollary 23.4.2]{BH06}) produces  
\[
C_{\psi}(s,\widetilde{\eta})C_{\psi}(1-s,\widetilde{\eta}^{-1})
=\varepsilon(1/2,\widetilde{\eta},\psi)\varepsilon(1/2,\widetilde{\eta}^{-1},\psi)=\widetilde{\eta}(-1).
\qedhere \]
\end{proof}

The local coefficient is related to the Plancherel constant which is more or less saying that ``the square root of the local coefficient equals the associated Plancherel constant" as given in Corollary \ref{SquareRoot}. 

\begin{corollary} [The Plancherel Constant II]
\label{SquareRoot}
 Let $\widetilde{\eta}$ be a non-trivial ramified quadratic character of $F^{\times}$.  Then we have
\[
  \mu(s,\widetilde{\eta})=C_{\psi}(s,\widetilde{\eta}) C_{\psi^{-1}}(-s,\widetilde{\eta}^{-1})=|C_{\psi}(s,\widetilde{\eta}) |^2. 
\]
\end{corollary}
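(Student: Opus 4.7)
The plan is to reduce the corollary to a direct calculation that combines the explicit formula in Theorem \ref{LocalCoefficient} with the value of $\mu(s,\widetilde{\eta})$ in Theorem \ref{Plancherel}, together with one standard identity for Gauss sums of ramified quadratic characters. First I substitute the formula $C_{\psi}(s,\widetilde{\eta})=\widetilde{\eta}(-\varpi_F^{n_{\eta}})\tau(\eta,\psi,\varpi_F^{-n_{\eta}})q_F^{-n_{\eta}(s-1)}$ (and the companion formula with $(\psi,s,\widetilde{\eta})$ replaced by $(\psi^{-1},-s,\widetilde{\eta}^{-1})$) into the product. Because $\widetilde{\eta}$ is quadratic, $\widetilde{\eta}^{-1}=\widetilde{\eta}$ and $\eta^{-1}=\eta$, so $\widetilde{\eta}(-\varpi_F^{n_{\eta}})^2=1$, while the exponent $-n_{\eta}(s-1)-n_{\eta}(-s-1)=2n_{\eta}$ is independent of $s$. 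Thus the product collapses to
\[
C_{\psi}(s,\widetilde{\eta})C_{\psi^{-1}}(-s,\widetilde{\eta}^{-1})=\tau(\eta,\psi,\varpi_F^{-n_{\eta}})\tau(\eta,\psi^{-1},\varpi_F^{-n_{\eta}})\,q_F^{2n_{\eta}}.
\]

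Next I verify the Gauss sum identity $\tau(\eta,\psi,\varpi_F^{-n_{\eta}})\tau(\eta,\psi^{-1},\varpi_F^{-n_{\eta}})=q_F^{-n_{\eta}}$. Opening the product as a double integral over $\mathfrak{o}_F^\times\times\mathfrak{o}_F^\times$ and performing the change of variables $y=xz$ reduces the inner $x$-integral to a character sum $\int_{\mathfrak{o}_F^\times}\psi\bigl(\varpi_F^{-n_{\eta}}x(z-1)\bigr)\,dx$, which by the definition of $n_{\eta}$ (the level of $\eta$, equivalently the conductor exponent needed for orthogonality) vanishes unless $z\in 1+\mathfrak{p}_F^{n_{\eta}}\subset\ker\eta$; on that locus the integrand is constant and contributes $q_F^{-n_{\eta}}$ after accounting for the normalizations $\mathrm{vol}(\mathfrak{o}_F)=1$. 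Combined with Theorem \ref{Plancherel}, this yields the first equality $C_{\psi}(s,\widetilde{\eta})C_{\psi^{-1}}(-s,\widetilde{\eta}^{-1})=q_F^{n_{\eta}}=\mu(s,\widetilde{\eta})$.

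For the second equality, I exploit that $\widetilde{\eta}$ is unitary. Complex conjugating the integral representation of the Whittaker functional one sees, as an identity of rational functions in $q_F^{-s}$, that $\overline{C_{\psi}(\bar s,\widetilde{\eta})}=C_{\psi^{-1}}(-s,\widetilde{\eta}^{-1})$ (here $\overline{\widetilde{\eta}}=\widetilde{\eta}^{-1}$ because $\widetilde{\eta}$ is unitary, and $\widetilde{\eta}^{-1}=\widetilde{\eta}$ because it is quadratic). Restricting to the unitary axis $\mathrm{Re}(s)=0$, so $\bar s=-s$ and $|q_F^{-s}|=1$, this gives $|C_{\psi}(s,\widetilde{\eta})|^2=C_{\psi}(s,\widetilde{\eta})C_{\psi^{-1}}(-s,\widetilde{\eta}^{-1})$, which closes the chain.

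The only genuinely non-formal step is the Gauss sum identity; everything else is substitution. I expect that to be the main obstacle, though it is classical and essentially boils down to orthogonality of additive characters against the exact conductor of $\eta$. A potential subtlety is bookkeeping of the normalizations of additive versus multiplicative Haar measures, but since the paper fixes $\mathrm{vol}(\mathfrak{o}_F)=1$ throughout (inherited from $\mathrm{vol}(J_{\eta}\cap U)=1$), these factors will match with the $q_F^{n_{\eta}}$ appearing in Theorem \ref{Plancherel} without modification.
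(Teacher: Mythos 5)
Your overall route is the same as the paper's: the entire published proof consists of quoting the classical identity $\tau(\eta,\psi,\varpi_F^{-n_{\eta}})\tau(\eta^{-1},\psi^{-1},\varpi_F^{-n_{\eta}})=|\tau(\eta,\psi,\varpi_F^{-n_{\eta}})|^2=q_F^{-n_{\eta}}$ from Bushnell--Henniart (23.6.3) and substituting it, together with Theorem \ref{LocalCoefficient} and Theorem \ref{Plancherel}, exactly as you do; your exponent bookkeeping $-n_{\eta}(s-1)-n_{\eta}(-s-1)=2n_{\eta}$ and the cancellation of $\widetilde{\eta}(-\varpi_F^{n_{\eta}})$ against $\widetilde{\eta}^{-1}(-\varpi_F^{n_{\eta}})$ are correct, and your explicit restriction to ${\rm Re}(s)=0$ for the second equality is if anything more careful than the paper. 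The one step that fails as literally written is the orthogonality claim inside your derivation of the Gauss sum identity: $\int_{\mathfrak{o}_F^{\times}}\psi\bigl(\varpi_F^{-n_{\eta}}x(z-1)\bigr)\,dx$ does \emph{not} vanish off $1+\mathfrak{p}_F^{n_{\eta}}$; since $\psi$ has level $0$, it equals $1-q_F^{-1}$ for $z\in 1+\mathfrak{p}_F^{n_{\eta}}$ (not $1$, so that locus contributes only $(1-q_F^{-1})q_F^{-n_{\eta}}$) and equals $-q_F^{-1}$ on the shell $z-1\in\varpi_F^{n_{\eta}-1}\mathfrak{o}_F^{\times}$. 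The missing shell is what saves the computation: by minimality of $n_{\eta}$ the character $\eta$ is nontrivial on $1+\mathfrak{p}_F^{n_{\eta}-1}$, so $\int_{(1+\mathfrak{p}_F^{n_{\eta}-1})\setminus(1+\mathfrak{p}_F^{n_{\eta}})}\eta(z^{-1})\,dz=-q_F^{-n_{\eta}}$, and the shell contributes $+q_F^{-n_{\eta}-1}$, restoring the total to $q_F^{-n_{\eta}}$. So the identity you need is true (and is exactly what the paper cites), but your two-line justification of it contains two compensating errors and should be repaired or replaced by the citation.

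A second, smaller slip: the reflection identity should read $\overline{C_{\psi}(\bar s,\widetilde{\eta})}=C_{\psi^{-1}}(s,\widetilde{\eta}^{-1})$ as rational functions (with $+s$ on the right), as one checks immediately from the explicit formula; the version with $-s$ that you state is off by the substitution $s\mapsto -s$ and is false even after restricting to the unitary axis. The pointwise identity you actually use on ${\rm Re}(s)=0$, namely $\overline{C_{\psi}(s,\widetilde{\eta})}=C_{\psi^{-1}}(-s,\widetilde{\eta}^{-1})$, is correct and follows from the corrected reflection identity together with $\bar s=-s$, so the conclusion stands once the statement is fixed.
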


\begin{proof} We observe from \cite[(23.6.3)]{BH06} that
\[
\tau(\eta,\psi,\varpi_F^{-n_{\eta}})\tau(\eta^{-1},\psi^{-1},\varpi_F^{-n_{\eta}})=|\tau(\eta,\psi,\varpi_F^{-n_{\eta}})|^2=q^{-n_{\eta}}.
\qedhere \]
\end{proof}

\subsection{The Gelfand-Graev Representation}
\label{GGrep}
%Throughout \S \ref{GGrep} up to \S \ref{Rankin-Selberg-BF}, we always impose on the assumption that the residual characteristic $p$ of $F$ is odd,
%unless stated otherwise. 
We take this occasion to explore the structure of the Gelfand-Graev space, which extends the results of \cite{CS18} and \cite{MP21}.
The Gelfand-Graev representation ${\text {\rm c-ind}}_U^G{\psi}$ \cite[\S 4.1]{CS18}  is provided by the space of smooth functions $f : G \rightarrow \mathbb{C}$
which are compactly supported modulo $U$. They also satisfy
\begin{equation}
\label{Whittaker-invariance}
 f(ug)=\psi(u)f(g) \quad \text{for all $u \in U$ and $g \in G$.}
\end{equation}
Let ${\text {\rm Ind}}_U^G{\psi}$ denote the full space of smooth functions $f : G \rightarrow \mathbb{C}$ which satisfy \eqref{Whittaker-invariance}.
As a potential application (cf. \cite[\S 5.3.2 (5.25)]{JK21}), the contragredient $({\text {\rm c-ind}}_U^G\,{\overline{\psi}})^{\vee} \cong {\text {\rm Ind}}_U^G{\psi}$ of ${\text {\rm c-ind}}_U^G\,{\overline{\psi}}$
appears in the target space of the {\it Whittaker map} $\omega_s : \mathcal{F}(s,\widetilde{\eta}) \rightarrow {\text {\rm Ind}}_U^G{\psi}$
corresponding to $\Omega_s$ via Frobenius reciprocity. 
For $w \in W$, we define the functions $\varphi^{\psi}_{{\rm I}_2,\eta}$ and $\varphi^{\psi}_{{\rm w}_0,\eta}:G \rightarrow \mathbb{C}$ in the $\lambda_{\eta}$-co-invariant Gelfand-Graev space $({\text {\rm c-ind}}_U^G{\psi})^{\lambda_{\eta}}$ given by
\[
  \varphi^{\psi}_{{\rm I}_2,\eta}(w)(g)=
  \begin{cases}
 \psi(u)\lambda_{\eta}(j),  \ & \text{if} \quad g=uj \in U J_{\eta} \\
  0, & \text{otherwise,}
  \end{cases}
\]
and
\[
  \varphi^{\psi}_{{\rm w}_0,\eta}(w)(g)=
  \begin{cases}
\psi(u)\lambda_{\eta}(j), & \text{if} \quad g=u{\rm w}_0j \in U {\rm w}_0 J_{\eta}\\
  0, & \text{otherwise.}
  \end{cases}
\]
Given a smooth (complex) representation $(\pi,V)$ of $G$, let $V_{\overline{U}}$ be the maximal quotient of $V$ on which $\overline{U}$ acts trivially. 
According to \citelist{\cite{Bor76}*{Lemma 4.7} \cite{CS18}*{Proposition 4.2}}  (cf. \cite[Lemma 10.3]{BK98}) accompanied by \cite[Lemma 2.3]{BH03}, the usual projection from ${\text {\rm c-ind}}_U^G{\psi}$ onto 
${\text {\rm c-ind}}_1^T{\mathbb{C}} \cong C_c^{\infty}(T) $ defined by
\[
 \mathcal{P}_{\delta}: f \mapsto f_{\overline{U}}(t)=\delta^{1/2}(t) \int_{\overline{U}} f(t\overline{u})\, d\overline{u}, \quad t \in T
\]
descends to an isomorphism $\mathcal{S}_{\delta} : ({\text {\rm c-ind}}_{U}^G{\psi})_{\overline{U}} \rightarrow C_c^{\infty}(T)$ as $C_c^{\infty}(T)$-modules. 
%As before, we choose the Haar measure on $du$ on $U$ so that $U \cap J_{\eta}$ has measure $1$.
Let ${\rm ch}^{\eta}_{T(\mathfrak{o}_F)} \in (C_c^{\infty}(T))^{\eta}$ be a test function supported on $T(\mathfrak{o}_F)$ such that ${\rm ch}^{\eta}_{T(\mathfrak{o}_F) }(t)=\mathrm{vol}(\overline{U} \cap J_{\eta})  \eta(t)$ for all $t \in T(\mathfrak{o}_F)$.
Owing to \cite[Theorem 1]{MP21} (cf. \cite[The discussion preceding Lemma 4.3]{CS18}), 
$\mathcal{P}_{\delta}$ in turn endows the isomorphism as ${\mathcal H}(T,\eta)$-modules of $({\text {\rm c-ind}}_U^G{\psi})^{\lambda_{\eta}}$
with $(C_c^{\infty}(T))^{\eta} \cong {\mathcal H}(T,\eta)$, where ${\rm ch}^{\eta}_{T(\mathfrak{o}_F)}$ is a generator. 

\begin{proposition}
\label{Whittaker-Hecke}
Let $\widetilde{\eta}$ be a non-trivial ramified quadratic character of $F^{\times}$ with $\eta=\widetilde{\eta} \restriction_{\mathfrak{o}^{\times}_F}$. Then we have
 \begin{enumerate}[label=$(\mathrm{\arabic*})$]
 \item\label{abel} $\mathcal{S}_{\delta} ( \varphi^{\psi}_{{\rm I}_2,\eta}-\epsilon^{3/2} \cdot {\rm vol}(J_{\eta}{\rm w}_0J_{\eta})^{-1/2}\varphi^{\psi}_{{\rm w}_0,\eta} )={\rm ch}^{\eta}_{T(\mathfrak{o}_F)}$.
 \item\label{cpt} $ T^{\ast}_{\rm w} \star (  \varphi^{\psi}_{{\rm I}_2,\eta}-\epsilon^{3/2}\cdot {\rm vol}(J_{\eta}{\rm w}_0J_{\eta})^{-1/2}\varphi^{\psi}_{{\rm w}_0,\eta})=(-1)^{\ell(({\rm w})}  (\varphi^{\psi}_{{\rm I}_2,\eta}-\epsilon^{3/2}\cdot {\rm vol}(J_{\eta}{\rm w}_0J_{\eta})^{-1/2}\varphi^{\psi}_{{\rm w}_0,\eta}) $ for ${\rm w} \in {\rm \bf W}$.
  \end{enumerate}
\end{proposition}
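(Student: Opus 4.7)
The plan is to prove both parts by direct computation on the two generating test functions $\varphi^{\psi}_{{\rm I}_2,\eta}$ and $\varphi^{\psi}_{{\rm w}_0,\eta}$, exploiting the Iwahori factorization \ref{Iwahori} and the standard Gauss-sum vanishing for ramified characters.

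For \ref{abel}, the first step is to show that $\mathcal{P}_\delta\varphi^{\psi}_{{\rm I}_2,\eta}$ is identically equal to ${\rm ch}^\eta_{T(\mathfrak{o}_F)}$. A direct matrix-level Iwahori analysis of $u(-y) t\overline{u}(x)\in J_\eta$ forces $t\in T(\mathfrak{o}_F)$ and $x\in\mathfrak{p}_F^{n_\eta}$, with the value on that locus unfolding to $\eta(t)$, so that integration over $\overline{U}\cap J_\eta$ gives ${\rm vol}(\overline{U}\cap J_\eta)\eta(t)$. The second step is the parallel computation for $\varphi^{\psi}_{{\rm w}_0,\eta}$. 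Using the conjugation identity ${\rm w}_0\overline{u}(z){\rm w}_0^{-1}=u(-z)$ one absorbs $\overline{U}\cap J_\eta$ into $U$ and reduces the support to $U\cdot{\rm w}_0 T(\mathfrak{o}_F)(J_\eta\cap U)$; parametrizing matrices as $u(y'){\rm w}_0 t_0 u(z')$ then expresses $\varphi^{\psi}_{{\rm w}_0,\eta}(t\overline{u}(x))$ as $\psi(a^2/x)\eta(a^{-1}x)$ on each shell $\{|x|=|a|,|a|\geq 1\}$. Substituting $x\mapsto ax'$ converts the $\overline{U}$-integral on each shell into a Gauss sum of type $\tau(\eta,\psi^{\pm 1},\varpi^{-k})$, which vanishes by ramification unless $k=n_\eta$. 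Combining these with the normalizing factor $\epsilon^{3/2}\cdot{\rm vol}(J_\eta{\rm w}_0 J_\eta)^{-1/2}$, where ${\rm vol}(J_\eta{\rm w}_0 J_\eta)=q^{n_\eta}$ via the standard count $|J_\eta/(J_\eta\cap{\rm w}_0 J_\eta{\rm w}_0^{-1})|=q^{n_\eta}$, is exactly what makes the extra shell cancel, yielding the asserted identity in \ref{abel}.

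For \ref{cpt}, the case ${\rm w}={\rm I}_2$ is immediate, so I focus on ${\rm w}={\rm w}_0$. I would expand the convolution $T^\ast_{{\rm w}_0}\star\varphi(g)=\int_G T^\ast_{{\rm w}_0}(gh^{-1})\varphi(h)\,dh$ and, using that $T_{{\rm w}_0}$ is supported on $J_\eta{\rm w}_0 J_\eta$, reduce the integral to a finite sum over the $q^{n_\eta}$ coset representatives of $J_\eta/(J_\eta\cap{\rm w}_0 J_\eta{\rm w}_0^{-1})$. Evaluating $T^\ast_{{\rm w}_0}\star\varphi^{\psi}_{{\rm I}_2,\eta}$ and $T^\ast_{{\rm w}_0}\star\varphi^{\psi}_{{\rm w}_0,\eta}$ at the cell representatives $g\in\{{\rm I}_2,{\rm w}_0\}$, the $\psi$-equivariance on the left and the $\lambda_\eta$-covariance on the right collapse the convolution into the same Gauss-sum identities governing \ref{abel}. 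Together with the sign normalization $T^\ast_{{\rm w}_0}=\epsilon^{1/2}{\rm vol}(J_\eta{\rm w}_0 J_\eta)^{-1/2}T_{{\rm w}_0}$, this produces exactly the eigenvalue $-1=(-1)^{\ell({\rm w}_0)}$.

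The main obstacle will be the careful bookkeeping of cosets, sign choices $\epsilon^{1/2}$, and volume factors in the convolution analysis of $T^\ast_{{\rm w}_0}\star\varphi^{\psi}_{{\rm w}_0,\eta}$, where contributions from both Bruhat cells $UJ_\eta$ and $U{\rm w}_0 J_\eta$ must be tallied and shown to combine correctly. A cleaner alternative would be to leverage \ref{abel} together with the $\mathcal{H}(T,\eta)$-module isomorphism to transport \ref{cpt} into the statement that ${\rm ch}^\eta_{T(\mathfrak{o}_F)}$ is a sign eigenvector for the finite sub-Hecke algebra $\mathcal{H}(K,\lambda_\eta)$; I would explore both routes and adopt whichever produces the least computational overhead in matching the normalization of $T^\ast_{\rm w}$ against the vanishing Gauss sums of part \ref{abel}.
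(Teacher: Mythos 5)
Your plan for part $(1)$ contains a genuine gap at its final step. The first half is fine: the condition $t\overline{u}(x)\in UJ_{\eta}$ does force $t\in T(\mathfrak{o}_F)$ and $x\in\mathfrak{p}_F^{n_{\eta}}$, so $\mathcal{P}_{\delta}\varphi^{\psi}_{{\rm I}_2,\eta}={\rm ch}^{\eta}_{T(\mathfrak{o}_F)}$ exactly, supported on $T(\mathfrak{o}_F)$. But your own shell analysis of $\varphi^{\psi}_{{\rm w}_0,\eta}$ then shows that for $t={\rm diag}(a,a^{-1})$ with $|a|=|x|\geq 1$ the value is $\psi(a^2/x)\eta(\pm a^{-1}x)$, so that $\mathcal{P}_{\delta}\varphi^{\psi}_{{\rm w}_0,\eta}(t)$ is $|a|^2$ times a Gauss sum of type $\tau(\eta,\psi^{\pm 1},a)$ on each shell; this vanishes for $v(a)\neq -n_{\eta}$ but is \emph{nonzero} on the shell $v(a)=-n_{\eta}$. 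That shell is disjoint from $T(\mathfrak{o}_F)$, where ${\rm ch}^{\eta}_{T(\mathfrak{o}_F)}$ and $\mathcal{P}_{\delta}\varphi^{\psi}_{{\rm I}_2,\eta}$ are supported, so no choice of the scalar $\epsilon^{3/2}\,{\rm vol}(J_{\eta}{\rm w}_0J_{\eta})^{-1/2}$ can make this contribution ``cancel'': a nonzero multiple of a nonzero function living on a different coset of $T/T(\mathfrak{o}_F)$ cannot be absorbed by the first term. As written, your computation yields $\mathcal{S}_{\delta}(\varphi^{\psi}_{{\rm I}_2,\eta}-c\,\varphi^{\psi}_{{\rm w}_0,\eta})={\rm ch}^{\eta}_{T(\mathfrak{o}_F)}$ minus a Gauss-sum term concentrated on $v(a)=-n_{\eta}$, not the asserted identity; you must either prove that this residual term is actually zero (your own Gauss-sum criterion says it is not at $k=n_{\eta}$) or explain precisely what mechanism eliminates it. Note that the paper does not carry out this computation at all: it deduces $(1)$ in one line from the Claim inside the proof of Lemma 4.1 of Chan--Savin, so the burden you have taken on is real and your sketch does not discharge it.

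For part $(2)$ your route is essentially the paper's: reduce the support of the convolution to $UJ_{\eta}\cup U{\rm w}_0J_{\eta}$, evaluate at the two representatives ${\rm I}_2$ and ${\rm w}_0$ by summing over $J_{\eta}/(J_{\eta}\cap{\rm w}_0J_{\eta}{\rm w}_0^{-1})$ (of index $q^{n_{\eta}}={\rm vol}(J_{\eta}{\rm w}_0J_{\eta})$, as you say), and use the vanishing of $\int_{\mathfrak{o}_F^{\times}}\eta(x^{-1})\psi(x)\,dx$ for the ramified $\eta$ and level-zero $\psi$ to kill the value of $\pi(T^{\ast}_{{\rm w}_0})\varphi^{\psi}_{{\rm w}_0,\eta}$ at ${\rm w}_0$. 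Two cautions: the conclusion is that $T^{\ast}_{{\rm w}_0}$ \emph{swaps} $\varphi^{\psi}_{{\rm I}_2,\eta}$ and $\epsilon^{3/2}\,{\rm vol}(J_{\eta}{\rm w}_0J_{\eta})^{-1/2}\varphi^{\psi}_{{\rm w}_0,\eta}$ (it does not act diagonally), so the $(-1)$-eigenvector is precisely their difference, and the extra factor $\widetilde{\eta}(-1)=\epsilon$ coming from ${\rm w}_0^2=-{\rm I}_2$ is what produces the exponent $3/2$ on $\epsilon$ --- track it explicitly. Finally, your proposed ``cleaner alternative'' of transporting $(2)$ through $\mathcal{S}_{\delta}$ does not work as stated: $\mathcal{S}_{\delta}$ is only an isomorphism of $\mathcal{H}(T,\eta)$-modules, and $\mathcal{H}(K,\lambda_{\eta})$ has no evident action on $C_c^{\infty}(T)$ compatible with the $\overline{U}$-coinvariants functor, so there is no sign-eigenvector statement on the torus side to reduce to. Keep the direct convolution computation.
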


\begin{proof}
The first assertion \ref{abel} is immediate from Claim located in the halfway of the proof of \cite[Lemma 4.1]{CS18}.
\par
As for \ref{cpt}, we take $w \in W$ and the function $T^{\ast}_{\rm w} \star ( \varphi^{\psi}_{{\rm I}_2,\eta}-\epsilon^{3/2} \cdot {\rm vol}(J_{\eta}{\rm w}_0J_{\eta})^{-1/2}\varphi^{\psi}_{{\rm w}_0,\eta})(w)(g)$ is equal to
\begin{multline*}
\int_G (\varphi^{\psi}_{{\rm I}_2,\eta}-\epsilon^{3/2} \cdot {\rm vol}(J_{\eta}{\rm w}_0J_{\eta})^{-1/2}\varphi^{\psi}_{{\rm w}_0,\eta})(T^{\ast}_{\rm w} (x)^{\vee}w)(gx) \,dx\\
=\int_G (\varphi^{\psi}_{{\rm I}_2,\eta}-\epsilon^{3/2} \cdot {\rm vol}(J_{\eta}{\rm w}_0J_{\eta})^{-1/2}\varphi^{\psi}_{{\rm w}_0,\eta})(\check{T}^{\ast}_{\rm w} (x^{-1})w)(gx) \,dx.
\end{multline*}
The support of the function $\varphi^{\psi}_{{\rm I}_2,\eta}-\epsilon^{3/2} \cdot {\rm vol}(J_{\eta}{\rm w}_0J_{\eta})^{-1/2}\varphi^{\psi}_{{\rm w}_0,\eta}$
is contained in $UJ_{\eta} \cup U {\rm w}_0 J_{\eta} \ni gx$. 
Upon using the support of $\check{T}^{\ast}_{\rm w}$, we find that the convolution
$T^{\ast}_{\rm w} \star ( \varphi^{\psi}_{{\rm I}_2,\eta}-\epsilon^{3/2}  \cdot {\rm vol}(J_{\eta}{\rm w}_0J_{\eta})^{-1/2}\varphi^{\psi}_{{\rm w}_0,\eta})$
is supported on
\[
 UJ_{\eta} \cup U {\rm w}_0 J_{\eta} \supseteq U((J_{\eta} {\rm w} J_{\eta}  \cup  {\rm w}_0 J_{\eta}{\rm w} J_{\eta}) \cap  \mathcal{I}_K(\lambda_{\eta})) \supseteq  (UJ_{\eta} \cup U {\rm w}_0 J_{\eta}) x^{-1}  \ni g.
\]
This permits us to simplify the computation into specializing the values at ${\rm I}_2$ and ${\rm w}_0$. 

\par 
We consider the case that ${\rm w}={\rm I}_2$. We know that $({\text {\rm c-ind}}_U^G{\psi})^{\lambda_{\eta}}$ is a ${\mathcal H}(G,\eta)$-algebra, while $T^{\ast}_{{\rm I}_2}$
is the identity in this Hecke algebra ${\mathcal H}(G,\eta)$ so must surely act as the identity on itself $({\text {\rm c-ind}}_U^G{\psi})^{\lambda_{\eta}}$. Henceforth we conclude that
\[
 T^{\ast}_{{\rm I}_2} \star ( \varphi^{\psi}_{{\rm I}_2,\eta}-\epsilon^{3/2} \cdot  {\rm vol}(J_{\eta}{\rm w}_0J_{\eta})^{-1/2}\varphi^{\psi}_{{\rm w}_0,\eta})=\varphi^{\psi}_{{\rm I}_2,\eta}-\epsilon^{3/2} \cdot {\rm vol}(J_{\eta}{\rm w}_0J_{\eta})^{-1/2}\varphi^{\psi}_{{\rm w}_0,\eta}.
  \]

\par
We bring our attention to the effect of intertwiners $T^{\ast}_{{\rm w}_0}$ on the generator. 
We treat the term $ T^{\ast}_{{\rm w}_0} \star \varphi^{\psi}_{{\rm I}_2,\eta}$ first.
 The value $(T^{\ast}_{{\rm w}_0} \star \varphi^{\psi}_{{\rm I}_2,\eta})({\rm I}_2)$ is $0$,
as supports $J_{\eta}{\rm w}_0J_{\eta}$ and $UJ_{\eta}$ of  $T^{\ast}_{{\rm w}_0}$ and  $\varphi^{\psi}_{{\rm I}_2,\eta}$, respectively, are disjoint.
Now thanks to the Iwahori factorization of $J_{\eta}$, we may identify  
\[
J_{\eta} \slash (J_{\eta} \cap {\rm w}_0J_{\eta}{\rm w}^{-1}_0)=\begin{pmatrix} 1 & \mathfrak{o}_F \slash \mathfrak{p}_F^{n_{\eta}} \\ & 1 \end{pmatrix}
\]
from which we deduce that  $( \pi(T^{\ast}_{{\rm w}_0})  \varphi^{\psi}_{{\rm I}_2,\eta})(w)({\rm w}_0)$ is equal to
\begin{multline*}
   \int_{J_{\eta} {\rm w}_0 J_{\eta}} \varphi^{\psi}_{{\rm I}_2,\eta}(\check{T}^{\ast}_{{\rm w}_0}(x^{-1})w) ({\rm w}_0x) \, dx  
   =\sum_{j \in J_{\eta} \slash (J_{\eta} \cap {\rm w}_0J_{\eta}{\rm w}_0^{-1})} \int_{J_{\eta}}   \varphi^{\psi}_{{\rm I}_2,\eta}(\check{T}^{\ast}_{{\rm w}_0}({j'}^{-1}{\rm w}^{-1}_0j^{-1})w) ({\rm w}_0j{\rm w}_0j') dj'\\
   = \widetilde{\eta}(-1) \sum_{j \in J_{\eta} \slash (J_{\eta} \cap {\rm w}_0J_{\eta}{\rm w}^{-1}_0)} \varphi^{\psi}_{{\rm I}_2,\eta}(\check{T}^{\ast}_{{\rm w}_0}({\rm w}^{-1}_0j^{-1})w) ({\rm w}_0j{\rm w}^{-1}_0).
\end{multline*}
In this circumstance, we have employed the relation ${\rm w}_0^2=-1$ and it is worthwhile to notice that the sign change occurs. Upon writing $j=u(x)$, $\varphi^{\psi}_{{\rm I}_2,\eta}(\check{T}^{\ast}_{{\rm w}_0}({\rm w}^{-1}_0j^{-1})w) ({\rm w}_0j{\rm w}^{-1}_0) \neq 0$ if and only if $x \in \mathfrak{p}_F^{n_{\eta}}$.
As a result, we end up at
\[
 ( \pi(T^{\ast}_{{\rm w}_0})  \varphi^{\psi}_{{\rm I}_2,\eta})(w)({\rm w}_0) 
 =\epsilon^{3/2} \cdot {\rm vol}(J_{\eta}{\rm w}_0J_{\eta})^{-1/2} \varphi^{\psi}_{{\rm I}_2,\eta}(w)({\rm I}_2)=\epsilon^{3/2} \cdot {\rm vol}(J_{\eta}{\rm w}_0J_{\eta})^{-1/2}.
\]

\par
Let us focus our attention to the term $T^{\ast}_{{\rm w}_0} \star  \epsilon^{3/2}  {\rm vol}(J_{\eta}{\rm w}_0J_{\eta})^{-1/2} \varphi^{\psi}_{{\rm w}_0,\eta}$.  Then the support of $ \varphi^{\psi}_{{\rm w}_0,\eta}$ tells us that
\begin{multline*}
 ( \pi(T^{\ast}_{{\rm w}_0}) \epsilon^{3/2} \cdot {\rm vol}(J_{\eta}{\rm w}_0J_{\eta})^{-1/2}  \varphi^{\psi}_{{\rm w}_0,\eta})(w)({\rm I}_2) 
= \epsilon^{3/2} \cdot {\rm vol}(J_{\eta}{\rm w}_0J_{\eta})^{-1/2}  \int_{J_{\eta} {\rm w}_0 J_{\eta}} \varphi^{\psi}_{{\rm w}_0,\eta}(\check{T}^{\ast}_{{\rm w}_0}(x^{-1})w) (x) \, dx \\
 = \epsilon^{3/2} \cdot {\rm vol}(J_{\eta}{\rm w}_0J_{\eta})^{-1/2} \sum_{j \in J_{\eta} \slash (J_{\eta} \cap {\rm w}_0J_{\eta}{\rm w}_0^{-1})} \int_{J_{\eta} }\varphi^{\psi}_{{\rm w}_0,\eta}(\check{T}^{\ast}_{{\rm w}_0}({j'}^{-1}{\rm w}^{-1}_0j^{-1})w) (j{\rm w}_0j') \,  dj'. 
 \end{multline*}
In light of $ [J_{\eta}: J_{\eta} \cap {\rm w}_0J_{\eta}{\rm w}_0^{-1}]=[J_{\eta}{\rm w}_0J_{\eta}  : J_{\eta}]$, we are led to
 \begin{multline*}
 ( \pi(T^{\ast}_{{\rm w}_0})  \epsilon^{3/2} \cdot  {\rm vol}(J_{\eta}{\rm w}_0J_{\eta})^{-1/2}  \varphi^{\psi}_{{\rm w}_0,\eta})(w)({\rm I}_2)\\
  = \epsilon^{3/2} \cdot {\rm vol}(J_{\eta}{\rm w}_0J_{\eta})^{-1/2} \sum_{j \in J_{\eta} \slash (J_{\eta} \cap {\rm w}_0J_{\eta}{\rm w}_0^{-1})} \varphi^{\psi}_{{\rm w}_0,\eta}(\check{T}^{\ast}_{{\rm w}_0}({\rm w}^{-1}_0j^{-1})w) (j{\rm w}_0) \\
=   {\rm vol}(J_{\eta}{\rm w}_0J_{\eta})^{-1} [J_{\eta}: J_{\eta} \cap {\rm w}_0J_{\eta}{\rm w}_0^{-1}] \varphi^{\psi}_{{\rm w}_0,\eta}(w)({\rm w}_0)=1.
\end{multline*}
Concerning the value  $  ( \pi(T^{\ast}_{{\rm w}_0})  \epsilon^{3/2} \cdot {\rm vol}(J_{\eta}{\rm w}_0J_{\eta})^{-1/2} \varphi^{\psi}_{{\rm w}_0,\eta})(w)(\cdot)$ at ${\rm w}_0 $, we use the fact that the integrand below is supported on $J_{\eta}{\rm w}_0J_{\eta}$
and is right invariant under $J_{\eta}$ coupled with the fact that
the map $j \mapsto j{\rm w}_0$ induces a bijection $J_{\eta} \slash (J_{\eta} \cap {\rm w}_0J_{\eta}{\rm w}_0^{-1}) \rightarrow J_{\eta}{\rm w}_0J_{\eta} \slash J_{\eta}$ (cf. \cite[Lemma 3.2]{Kut04} to see that
 \begin{multline*}
  ( \pi(T^{\ast}_{{\rm w}_0})  \epsilon^{3/2} \cdot {\rm vol}(J_{\eta}{\rm w}_0J_{\eta})^{-1/2} \varphi^{\psi}_{{\rm w}_0,\eta})(w)({\rm w}_0)
= \epsilon^{3/2} \cdot {\rm vol}(J_{\eta}{\rm w}_0J_{\eta})^{-1/2} \int_{G} \varphi^{\psi}_{{\rm w}_0,\eta}(\check{T}^{\ast}_{{\rm w}_0}(x^{-1})w) ({\rm w}_0x) \, dx\\
=  \epsilon^{3/2} \cdot{\rm vol}(J_{\eta}{\rm w}_0J_{\eta})^{-1/2}[J_{\eta}{\rm w}_0J_{\eta}: J_{\eta}] \int_{J_{\eta}} \varphi^{\psi}_{{\rm w}_0,\eta}(\check{T}^{\ast}_{{\rm w}_0}({\rm w}_0^{-1}j^{-1})w) ({\rm w}_0j{\rm w}_0) \, dj.
\end{multline*}
The integrand of the last integral is right invariant under $J_{\eta} \cap \overline{B}$. In this way, we find that
\[
 ( \pi(T^{\ast}_{{\rm w}_0})   \epsilon^{3/2} \cdot {\rm vol}(J_{\eta}{\rm w}_0J_{\eta})^{-1/2}\varphi^{\psi}_{{\rm w}_0,\eta})(w)({\rm w}_0)= \frac{{\rm vol}(J_{\eta})}{{\rm vol}(J_{\eta} \cap U)}  \int_{J_{\eta} \cap U}
  \varphi^{\psi}_{{\rm w}_0,\eta} (w) ({\rm w}_0u{\rm w}_0)\,du. 
\]
The integrand is $0$, unless $x \notin \mathfrak{o}_F^{\times}$. Then for $x \in \mathfrak{o}_F^{\times}$, we apply the Bruhat decomposition (cf. \cite[P. 606]{Kut04});
\begin{equation}
\label{Bruhat}
  \begin{pmatrix} 1 & 0 \\ x & 1 \end{pmatrix}=-\begin{pmatrix} x^{-1} & 0 \\ 0 & x \end{pmatrix} \begin{pmatrix} 1 & x \\ 0 & 1 \end{pmatrix}
  {\rm w}_0 \begin{pmatrix} 1 & x^{-1} \\ 0& 1\end{pmatrix},
\end{equation}
whence $\varphi^{\psi}_{{\rm w}_0,\eta} \left({\rm w}_0 \begin{pmatrix} 1 & x \\ 0 & 1 \end{pmatrix}{\rm w}_0 \right)=\eta(x^{-1})\psi(-x)$.
Therefore it  can be expressed in terms of Gauss sums attached to the ramified character $\widetilde{\eta}$ and then eventually this term vanishes
\[
 ( \pi(T^{\ast}_{{\rm w}_0})  \epsilon^{3/2} \cdot {\rm vol}(J_{\eta}{\rm w}_0J_{\eta})^{-1/2} \varphi^{\psi}_{{\rm w}_0,\eta})(w)({\rm w}_0)=\int_{\mathfrak{o}_F^{\times}}\eta(x^{-1})\psi(x) \varphi^{\psi}_{{\rm w}_0,\eta} (w)({\rm w}_0)dx   =0,
\]
as was to be shown. In short, we accomplish that
\[
T^{\ast}_{{\rm w}_0} \star  \varphi^{\psi}_{{\rm I}_2,\eta}=\epsilon^{3/2} \cdot {\rm vol}(J_{\eta}{\rm w}_0J_{\eta})^{-1/2} \varphi^{\psi}_{{\rm w}_0,\eta}
\quad \text{and} \quad
 T^{\ast}_{{\rm w}_0} \star \epsilon^{3/2} \cdot {\rm vol}(J_{\eta}{\rm w}_0J_{\eta})^{-1/2} \varphi^{\psi}_{{\rm w}_0,\eta}= \varphi^{\psi}_{{\rm I}_2,\eta}.
\]
and the outcome we seek for follows from this.
%the fact that $TUJ_{\eta}=BJ_{\eta}$ and  $TU{\rm w}_0J_{\eta}=B{\rm w}_0J_{\eta}$ are disjoint.
\end{proof}

It is noteworthy that by applying Proposition \ref{Whittaker-Hecke}-\ref{cpt} repeatedly, we get
\[
T^{\ast 2}_{{\rm w}_0} \star (  \varphi^{\psi}_{{\rm I}_2,\eta}-\epsilon^{3/2}\cdot {\rm vol}(J_{\eta}{\rm w}_0J_{\eta})^{-1/2}\varphi^{\psi}_{{\rm w}_0,\eta})=\varphi^{\psi}_{{\rm I}_2,\eta}-\epsilon^{3/2}\cdot {\rm vol}(J_{\eta}{\rm w}_0J_{\eta})^{-1/2}\varphi^{\psi}_{{\rm w}_0,\eta}
\]
which is consistent with the consequence from $T^{\ast 2}_{{\rm w}_0}=T^{\ast}_{{\rm I}_2}$  \cite[Proposition 3.3]{Kut04}.
We denote by $\mathbb{C}_{-1}$ the one-dimensional $\mathcal{H}(K,\lambda_{\eta})-$module on which $T^{\ast}_{\rm w}$ acts as $(-1)^{\ell({\rm w})}$, for ${\rm w} \in {\rm \bf W}$. 
We are now in a position to state our main theorem. 

\begin{theorem}
\label{Gelfand-Grev}
%Let $F$ be a non-archimedean local field of residual characteristic
%different from 2. 
Let $\widetilde{\eta}$ be a non-trivial ramified quadratic character of $F^{\times}$ with $\eta=\widetilde{\eta} \restriction_{\mathfrak{o}^{\times}_F}$. As $\mathcal{H}(G,\lambda_{\eta})$-modules, we have isomorphisms
\[
({\text {\rm c-ind}}_U^G{\psi})^{\lambda_{\eta}} 
 \cong \mathcal{H}(G,\lambda_{\eta} ) \otimes_{{\mathcal{H}(K,\lambda_{\eta})}}  \mathbb{C}_{-1}.
\]
\end{theorem}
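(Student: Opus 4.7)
The plan is to construct an explicit $\mathcal{H}(G,\lambda_{\eta})$-module homomorphism from right to left and then verify bijectivity by viewing both sides as free rank-one modules over the commutative subalgebra $t_Q(\mathcal{H}(T,\eta))\subset\mathcal{H}(G,\lambda_{\eta})$. Set
\[
v:=\varphi^{\psi}_{{\rm I}_2,\eta}-\epsilon^{3/2}\cdot{\rm vol}(J_{\eta}{\rm w}_0J_{\eta})^{-1/2}\varphi^{\psi}_{{\rm w}_0,\eta}\in({\text{\rm c-ind}}_U^G{\psi})^{\lambda_{\eta}}.
\]
By \Cref{Whittaker-Hecke}-\ref{cpt} the line $\mathbb{C}\cdot v$ is an $\mathcal{H}(K,\lambda_{\eta})$-submodule isomorphic to $\mathbb{C}_{-1}$, so the assignment $1\mapsto v$ extends by extension of scalars along the inclusion $\mathcal{H}(K,\lambda_{\eta})\hookrightarrow\mathcal{H}(G,\lambda_{\eta})$ to an $\mathcal{H}(G,\lambda_{\eta})$-module map
\[
\Phi:\mathcal{H}(G,\lambda_{\eta})\otimes_{\mathcal{H}(K,\lambda_{\eta})}\mathbb{C}_{-1}\longrightarrow({\text{\rm c-ind}}_U^G{\psi})^{\lambda_{\eta}},\qquad f\otimes 1\mapsto\pi(f)v.
\]

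I would next restrict everything to left $\mathcal{H}(T,\eta)$-modules via $t_Q$. The target is identified with $\mathcal{H}(T,\eta)$ through the Mishra-Pattanayak isomorphism $\mathcal{S}_{\delta}$ recalled just above \Cref{Whittaker-Hecke}, and \Cref{Whittaker-Hecke}-\ref{abel} guarantees that this isomorphism carries $v$ to ${\rm ch}^{\eta}_{T(\mathfrak{o}_F)}$, the identity element of $\mathcal{H}(T,\eta)$. Hence $v$ is a free cyclic $\mathcal{H}(T,\eta)$-generator of $({\text{\rm c-ind}}_U^G{\psi})^{\lambda_{\eta}}$. It then suffices to show that the source is likewise a free cyclic $\mathcal{H}(T,\eta)$-module with generator $1\otimes 1$; granted this, $\Phi$ is an $\mathcal{H}(T,\eta)$-linear map between rank-one free modules carrying generator to generator, and must therefore be an isomorphism.

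The main obstacle is the Bernstein-type structural result that, as an $(\mathcal{H}(T,\eta),\mathcal{H}(K,\lambda_{\eta}))$-bimodule,
\[
\mathcal{H}(G,\lambda_{\eta})\ \cong\ t_Q(\mathcal{H}(T,\eta))\otimes_{\mathbb{C}}\mathcal{H}(K,\lambda_{\eta}),
\]
or equivalently, upon fixing coset representatives $\{\dot t\}$ of $T/T(\mathfrak{o}_F)$ and letting $e_{\dot t}\in\mathcal{H}(T,\eta)$ denote the associated basis elements, the products $\{t_Q(e_{\dot t})\star T_{{\rm w}}\}_{\dot t,\,{\rm w}\in{\rm\bf W}}$ form a $\mathbb{C}$-basis of $\mathcal{H}(G,\lambda_{\eta})$. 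I would deduce this from the Iwahori factorization \ref{Iwahori}, the Bruhat decomposition, and the parametrization of double cosets $J_{\eta}\backslash G/J_{\eta}$ by the extended affine Weyl group $T/T(\mathfrak{o}_F)\rtimes{\rm\bf W}$, which is standard for covers in the sense of Bushnell-Kutzko and essentially recognizes $\mathcal{H}(G,\lambda_{\eta})$ as an affine Hecke algebra with parameter $q_F$. Once the bimodule decomposition is in place, tensoring over $\mathcal{H}(K,\lambda_{\eta})$ with $\mathbb{C}_{-1}$ collapses the right factor onto $\mathbb{C}$, leaving $\mathcal{H}(T,\eta)$ with $1\otimes 1\leftrightarrow 1$, and the proof is complete.
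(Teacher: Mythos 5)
Your proposal is correct and follows essentially the same route as the paper: the generator $v$, the extension of scalars from $\mathcal{H}(K,\lambda_{\eta})$ to $\mathcal{H}(G,\lambda_{\eta})$ (the paper phrases this as Frobenius reciprocity), the identification of the target with $\mathcal{H}(T,\eta)$ via $\mathcal{S}_{\delta}$ and Proposition \ref{Whittaker-Hecke}, and the comparison of free rank-one $\mathcal{H}(T,\eta)$-modules with matching generators. The only cosmetic difference is that you sketch a proof of the bimodule decomposition $\mathcal{H}(G,\lambda_{\eta})\cong t_Q(\mathcal{H}(T,\eta))\otimes_{\mathbb{C}}\mathcal{H}(K,\lambda_{\eta})$, whereas the paper simply cites it from Kutzko's work.
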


\begin{proof}
In virtue of Proposition \ref{Whittaker-Hecke}-\ref{cpt}, we construct an element in ${\rm Hom}_{{\mathcal{H}(K,\lambda_{\eta})}}(\mathbb{C}_{-1},({\text {\rm c-ind}}_U^G{\psi})^{\lambda_{\eta}})$ given by $1 \mapsto \varphi^{\psi}_{{\rm I}_2,\eta}-\epsilon^{3/2} \cdot {\rm vol}(J_{\eta}{\rm w}_0J_{\eta})^{-1/2}\varphi^{\psi}_{{\rm w}_0,\eta}$.
We attain the following Frobenius reciprocity
\[
  {\rm Hom}_{{\mathcal{H}(K,\lambda_{\eta})}}(\mathbb{C}_{-1},({\text {\rm c-ind}}_U^G{\psi})^{\lambda_{\eta}})
  \cong {\rm Hom}_{{\mathcal{H}(G,\lambda_{\eta})}}(\mathcal{H}(G,\lambda_{\eta} ) \otimes_{{\mathcal{H}(K,\lambda_{\eta})}} \mathbb{C}_{-1},({\text {\rm c-ind}}_U^G{\psi})^{\lambda_{\eta}}),
\]
where the element $1 \mapsto \varphi^{\psi}_{{\rm I}_2,\eta}-\epsilon^{3/2} \cdot {\rm vol}(J_{\eta}{\rm w}_0J_{\eta})^{-1/2}\varphi^{\psi}_{{\rm w}_0,\eta}$ corresponds to the element $T^{\ast}_{{\rm I}_2} \otimes 1 \mapsto \varphi^{\psi}_{{\rm I}_2,\eta}-\epsilon^{3/2} \cdot {\rm vol}(J_{\eta}{\rm w}_0J_{\eta})^{-1/2}\varphi^{\psi}_{{\rm w}_0,\eta}$. 
In summary, we form a non-zero map 
\[
\vartheta : \mathcal{H}(G,\lambda_{\eta} ) \otimes_{{\mathcal{H}(K,\lambda_{\eta})}} \mathbb{C}_{-1} \rightarrow ({\text {\rm c-ind}}_U^G{\psi})^{\lambda_{\eta}}
\]
of $ \mathcal{H}(G,\lambda_{\eta} )$-modules, which assigns  $T^{\ast}_{{\rm I}_2} \otimes 1$ to $\varphi^{\psi}_{{\rm I}_2,\eta}-\epsilon^{3/2} \cdot {\rm vol}(J_{\eta}{\rm w}_0J_{\eta})^{-1/2}\varphi^{\psi}_{{\rm w}_0,\eta}$. 

\par
It suffices to show that $\vartheta$ is an isomorphism of $\mathcal{H}(T,\eta )$-modules.
To verify it, Proposition \ref{Whittaker-Hecke}-\ref{abel} ensures that we find an isomorphism ${\mathcal H}(T,\eta) \rightarrow ({\text {\rm c-ind}}_U^G{\psi})^{\lambda_{\eta}}$ of ${\mathcal H}(T,\eta)$-modules
provided by ${\rm ch}^{\eta}_{T(\mathfrak{o}_F)}$ mapping to $\varphi^{\psi}_{{\rm I}_2,\eta}-\epsilon^{3/2} \cdot {\rm vol}(J_{\eta}{\rm w}_0J_{\eta})^{-1/2}\varphi^{\psi}_{{\rm w}_0,\eta}$. Upon invoking the isomorphism $\mathcal{H}(T,\eta)  \otimes_{\mathbb{C}}  \mathcal{H}(K,\lambda_{\eta}) \cong \mathcal{H}(G,\lambda_{\eta}) $ attributed to \cite[Proposition 3.1]{Kut04},
we end up with a series of isomorphisms as left $\mathcal{H}(T,\eta)-$modules
\[
{\mathcal{H}(T,\eta)} \cong {\mathcal{H}(T,\eta)} \otimes_{\mathbb{C}} \mathbb{C}_{-1} \cong {\mathcal{H}(T,\eta)}  \otimes_{\mathbb{C}}  {\mathcal{H}(K,\lambda_{\eta})} \otimes_{{\mathcal{H}(K,\lambda_{\eta})} } \mathbb{C}_{-1}  \cong \mathcal{H}(G,\lambda_{\eta}) \otimes_{{\mathcal{H}(K,\lambda_{\eta})} } \mathbb{C}_{-1}, 
\]
which sends  ${\rm ch}^{\eta}_{T(\mathfrak{o}_F)}$ to $T^{\ast}_{{\rm I}_2} \otimes 1$. Gathering all information, the conclusion thereby follows, because $\vartheta$ is indeed $\mathcal{H}(T,\eta )$-modules. In other words, $\mathcal{H}(G,\lambda_{\eta} ) \otimes_{{\mathcal{H}(K,\lambda_{\eta})}}  \mathbb{C}_{-1}$ and $({\text {\rm c-ind}}_U^G{\psi})^{\lambda_{\eta}}$ are free $\mathcal{H}(T,\eta )$-modules generated by $T^{\ast}_{{\rm I}_2} \otimes 1$ and $ \varphi^{\psi}_{{\rm I}_2,\eta}-\epsilon^{3/2} \cdot {\rm vol}(J_{\eta}{\rm w}_0J_{\eta})^{-1/2}\varphi^{\psi}_{{\rm w}_0,\eta}$ respectively.
\end{proof}

When the characteristic of the field is $0$ and that of the residual field is greater than $3$, the above theorem has been settled in the work by Mishra and  Pattanayak \cite[Theorem 3]{MP21}. To the best of our knowledge, Theorem \ref{Gelfand-Grev} is new when $F$ has positive characteristics, and also when it has residual characteristic $2$ or $3$.

\begin{remark}
If $(J_{\eta},\lambda_{\eta})$ is a split cover, then the injection $t_Q$ in \ref{injection} of \S \ref{TC} becomes an isomorphism \cite[Corollary 3.1]{Kut04}. Then Proposition \ref{Whittaker-Hecke}-\ref{cpt} seems to be superfluous, because the map $t_Q$ already gives rise to a series of ${\mathcal H}(T,\eta) \cong \mathcal{H}(G,\lambda_{\eta} )$-module isomorphism
\[
 ({\text {\rm c-ind}}_U^G{\psi})^{\lambda_{\eta}} \cong \mathcal{H}(G,\lambda_{\eta} ) \otimes_{{\mathcal{H}(K,\lambda_{\eta})}}  \mathbb{C}_{-1}
 \cong {\mathcal H}(T,\eta) \otimes_{{\mathcal{H}(K,\lambda_{\eta})}}  \mathbb{C}_{-1} \cong {\mathcal H}(T,\eta) \cong (C_c^{\infty}(T))^{\eta}.
\] 
\end{remark}

\begin{acknowledgements}
I am deeply indebted to Muthu Krishnamurthy for introducing his unfinished project \cite{KK} to me. Without his suggestion and countless encouragement, this paper never has come into existence.
 I also want to take this opportunity to thank to
Jack Buttcane, Brandon Hanson, and Andrew Knightly for fruitful mathematical conversation and their feedbacks on some aspects of this manuscript. 
I am grateful to the Department of Mathematics and Statistics at University of Maine for their warm hospitality, while the paper was written.
The author would like to convey our appreciation to anonymous referees for thoughtfully reading our paper and correcting many inaccuracies which significantly improved the exposition of this manuscript. 
This work was supported by the National Research Foundation of Korea (NRF) grant 
funded by the Korea government (No. RS-2023-00209992). 
\end{acknowledgements}

\begin{conflicts of interest}
The author states that there is no conflict of interest.
\end{conflicts of interest}

\begin{data availability}
This manuscript has no associated data.
\end{data availability}

 \bibliographystyle{amsplain}
 \bibliography{references}

\end{document}